\begin{document}
\newtheorem{teorema}{Theorem}
\newtheorem{lemma}{Lemma}
\newtheorem{utv}{Proposition}
\newtheorem{svoistvo}{Property}
\newtheorem{sled}{Corollary}
\newtheorem{con}{Conjecture}
\newtheorem{zam}{Remark}
\newtheorem{quest}{Question}
\newtheorem{problem}{Problem}
\newtheorem{example}{Example}

\author{A. A. Taranenko\thanks{Sobolev Institute of Mathematics, Novosibirsk, Russia; \texttt{taa@math.nsc.ru}}}
\title{Perfect colorings of hypergraphs}
\date{October 24, 2024}

\maketitle

\begin{abstract} 
Perfect colorings (equitable partitions) of graphs are extensively studied, while the same concept for hypergraphs attracts much less attention. The aim of this paper is to develop basic notions and properties of perfect colorings for hypergraphs.  Firstly, we introduce a multidimensional matrix equation for perfect colorings of hypergraphs and compare this definition with a standard approach  based on the incidence graph.   Next, we show that  the eigenvalues of the parameter matrix of a perfect coloring are eigenvalues of the multidimensional adjacency matrix of a hypergraph. We consider coverings of hypergraphs as a special case of perfect colorings and prove a theorem on the existence of a common covering of two hypergraphs. As an example, we show that a $k$-transversal in a hypergraph corresponds to a perfect coloring and calculate its parameters.  At last, we find all perfect $2$-colorings of the Fano's plane hypergraph and compute some eigenvalues of this hypergraph.

\textbf{Keywords:} perfect coloring; eigenvalues of hypergraphs; coverings of hypergraphs; multidimensional adjacency matrix.

\textbf{MSC2020:} 05C15; 05C50; 05C65.
\end{abstract}

\section{Introduction}

A coloring of vertices of a graph is called perfect if  a color of a vertex uniquely defines colors of adjacent vertices.

Perfect colorings of graphs have been studied for quite a long time and arose in the literature under different names. One of the most known of them is ``equitable partition'' that was introduced by Delsarte~\cite{dels.assch}. In book~\cite{CveDoobSac.gspectra} objects equivalent to perfect colorings are named as divisors of graphs. An algebraic definition of perfect colorings firstly appeared in book~\cite{godsil.algcomb}. At last, in~\cite{GodRoy.alggrath} one finds some information on graph coverings that can be considered as perfect colorings of a special type.

Graph perfect colorings  have plenty of applications in coding theory and the theory of combinatorial designs.  There are many results on enumeration or characterization of perfect colorings in certain families of graphs (see, e.g., survey papers~\cite{our.perfcolorHam, BorRifaZin.compregcodes}).

Meanwhile, the specifics of  perfect colorings in hypergraphs are hardly studied. As it was noted  in  ``Handbook of combinatorics''~\cite[Ch. 31.2]{GraGroLo.handbook}, a perfect coloring of a hypergraph is equivalent to a certain perfect coloring of its incidence graph.  In~\cite{PotAv.hypercolor}, it was shown that some combinatorial designs correspond to perfect colorings of hypergraphs. In particular, every transversal in a regular uniform hypergraph is a perfect $2$-coloring.

Reduction of perfect colorings of hypergraphs to colorings of graphs   implies immediately  some of their properties. On the other  hand, this approach uses redundant parameters of a coloring of hyperedges that can be deduced from a coloring of vertices.

The main aim of this paper is to propose and develop another perspective for perfect colorings of hypergraphs based on their multidimensional adjacency  matrices.

The paper is structured as follows. 

Section~\ref{prelim-section} is preliminary. It provides all definitions and auxiliary results on hypergraphs, their adjacency and incidence matrices, multidimensional matrices and their products, and eigenvalues of multidimensional matrices and hypergraphs. In this section we also briefly review the main results on perfect colorings in graphs that will be later generalized for hypergraphs. 

Section~\ref{main-section} is the main part of the paper.  Firstly, we consider in detail the definition of a hypergraph perfect coloring as a perfect coloring of its incidence graph. Although the concept of the hypergraph perfect coloring was introduced  earlier, it  is a  debatable  question how to  define their parameters  so that they still have the most nice properties of the graph case. Here we pay special attention to this question because  it was omitted in  previous works. Also we deduce some simple corollaries from  the approach based on the incidence graph that were not given before.  In particular, we get necessary and sufficient conditions for a pair of matrices to be parameters of a perfect coloring of some hypergraph  (Theorem~\ref{VWblock}) and establish the analogue of the Weisfeiler--Leman--Vizing theorem on the refinement of perfect coloring (Theorem~\ref{WLhyper}). 

In the second part of Section~\ref{main-section}, we propose  a new definition of a hypergraph perfect coloring in terms of its multidimensional adjacency matrix. In Theorem~\ref{adjparam} we show that for uniform hypergraphs this definition is equivalent to the previous one  and  introduce the multidimensional parameter matrix of a perfect coloring.  We also prove that the multidimensional parameter matrix can be symmetrized (Theorem~\ref{symmetrparam}) and connect it with the $2$-dimensional parameter matrices of the coloring (Corollary~\ref{parammultii2}). The major asset of the multidimensional parameter matrix is that  all  its eigenvalues are  also the eigenvalues of the adjacency matrix of a hypergraph (Theorem~\ref{adjcpecinclude}). We obtain it as a corollary  from  a   more  general statement for multidimensional matrices (Theorem~\ref{specinclude}). These results  allow one to find several eigenvalues of a multidimensional matrix (or hypergraph) on the basis of eigenvalues of certain smaller matrices (hypergraphs).

In Section~\ref{covering-setion}, we consider coverings of hypergraphs as a special case of perfect colorings. This allows us to easily get the following properties of coverings: characterization of hypergraphs that cover a given one (Theorem~\ref{coverinci}), preservation of parameters of a perfect coloring in coverings (Theorem~\ref{covercolorinclude}), and inclusion of the spectrum of hypergraphs into the spectrum of their coverings (Theorem~\ref{coverspecinclude}). The last result was  recently obtained in~\cite{SongFanWang.hyperspeccover} by another method.  At the end of the section we generalize  Leighton's theorem~\cite{leighton.comcover} on  common coverings from graphs to  hypergraphs (Theorem~\ref{commoncover} and Corollary~\ref{Leightonhyper}).  

At last, in Section~\ref{example-section} we consider several  examples of hypergraph perfect colorings. We find the parameter matrices of a hypergraph transversal as a perfect $2$-coloring and calculate their eigenvalues (Theorem~\ref{transspectrum}). Moreover, we show that multidimensional parameters may distinguish hypergraphs with no transversals, while the $2$-dimensional approach does not.  Finally, we compute the parameters of  perfect $2$-colorings of regular $3$-uniform hypergraphs and  find  all perfect $2$-colorings of the Fano's plane hypergraph  and some of its eigenvalues.

\section{Definitions and preliminaries} \label{prelim-section}

A hypergraph $\mathcal{G}$ is a pair of sets $X = X(\mathcal{G})$ and $E = E(\mathcal{G})$, $|X| = n$, $|E| = m$. $X$ is called a set of \textit{vertices} and $E$ is a set of \textit{hyperedges},  each $e \in E$ is some (nonempty) subset of $X$.  A vertex $x \in X$ is said to be \textit{incident} to a hyperedge $e $ if  $x \in e$.  Vertices $x$ and $y$ are \textit{adjacent} if there is $e \in E$ incident to both of them.    If $E$ is a multiset, then $\mathcal{G}$ is said to be a \textit{multihypergraph}. 

A hypergraph is  \textit{connected} if there is an interchanging sequence of incident vertices and hyperedges that connects a vertex $x$ with any other vertex $y$. Such a sequence is called a \textit{Berge path} from $x$ to $y$.  

A hypergraph $\mathcal{G}$ is said to be \textit{$d$-uniform}  if every hyperedge of $\mathcal{G}$ consists of $d$ vertices. Simple graphs are  exactly  $2$-uniform hypergraphs. A $d$-uniform hypergraph is \textit{complete} if its hyperedges are all $d$-element subsets of the vertex set.

A \textit{degree} of a vertex $x$ is the number of hyperedges containing $x$. A hypergraph $\mathcal{G}$ is \textit{$r$-regular} if every vertex of $\mathcal{G}$ has degree $r$. 

A hypergraph $\mathcal{G}$ is said to be \textit{$d$-partite} if there is a partition of its vertex set into $d$ disjoint subsets (\textit{parts}) such that each hyperedge of $\mathcal{G}$ contains no more than one vertex from each part. 

A \textit{$k$-transversal} in a hypergraph $\mathcal{G} = (X,E)$ is a set of vertices $Y \subseteq X$ such that every hyperedge of $\mathcal{G}$ contains exactly $k$ vertices from $Y$. A \textit{$k$-factor} in $\mathcal{G}$ is a set of hyperedges $U \subseteq E$ such that every vertex of $\mathcal{G}$ is incident to  exactly $k$ hyperedges from $U$. $1$-factors  are said to be \textit{perfect matchings} and $1$-transversals are known as \textit{transversals}. 

Transversals and factors are dual in the following sense. For every hypergraph $\mathcal{G}$ one can consider a dual hypergraph $\mathcal{G}^*$ obtained from $\mathcal{G}$ by interchanging sets of vertices and hyperedges and preserving the incidence relations between them.  Then every $k$-transversal in $\mathcal{G}$ is a $k$-factor in $\mathcal{G}^*$ and vice versa.

Every hypergraph $\mathcal{G} = (X,E)$ corresponds to the \textit{incidence graph} $G$ that is  a bipartite graph  with parts $X$ and $E$: $x \in X$ and $e \in E$ are adjacent in $G$ if and only if they are incident in $\mathcal{G}$.  Incidence graph is also known as the Levi graph or the bipartite representation of a hypergraph.

\subsection{Matrices and eigenvalues of hypergraphs}

Let $A = \{ a_{i,j}\} $, $i = 1, \ldots, n$, $j = 1, \ldots, m$ be a matrix of size $n \times m$. If $n = m$ then $A$ is a matrix of \textit{order} $n$.

We will say that a matrix $A$ of size $n \times m$ is a \textit{block matrix} with \textit{blocks} $B_{i,j}$ of sizes $n_i \times m_j$, $i = 1, \ldots, k$, $j = 1, \ldots, l$, $\sum\limits_{i=1}^k n_i = n$,  $\sum\limits_{i=1}^l m_j = m$, if  (after, possibly, appropriate permutations of rows and columns) the matrix $A$ can be presented in a form
$$
A = \left( 
\begin{array}{ccc}
B_{1,1} & \cdots & B_{1,l} \\
\vdots & \ddots & \vdots \\
B_{k,1} & \cdots & B_{k,l}
\end{array}
\right).
$$
For shortness, we write $A = \{B_{i,j} \}$.

Let $[n] = \{ 1, \dots, n\}$. A \textit{$d$-dimensional matrix $\mathbb{A}$ of order $n$} is an array of entries $(a_{\alpha})$, $a_{\alpha} \in \mathbb{R}$, indexed by tuples $\alpha \in [n]^d$, $\alpha = (\alpha_1, \ldots, \alpha_d)$. Vectors can be considered as $1$-dimensional matrices, and usual matrices have $2$ dimensions. 

Let $\mathbb{A}$ be a $d$-dimensional matrix of order $n$.
A \textit{hyperplane} $\Gamma$ of \textit{direction} $i$ in $\mathbb{A}$ is a $(d-1)$-dimensional submatrix obtained by fixing a value of index component $\alpha_i$ and letting the other $d-1$ components vary. 

A matrix $\mathbb{A}$ is said to be \textit{symmetric} if for every permutation $\sigma \in S_d$ and for every index $\alpha \in [n]^d$ we have $a_{\alpha} = a_{\sigma(\alpha)}$, where $\sigma(\alpha) = (\alpha_{\sigma(1)}, \ldots, \alpha_{\sigma(d)})$. In other words, the matrix $\mathbb{A}$ does not change under permutations of directions of hyperplanes. 

The \textit{$d$-dimensional identity matrix $\mathbb{I}$} is the matrix with entries $i_{\alpha} = 1$ if $\alpha_1 = \cdots = \alpha_d$ and $i_\alpha = 0$ otherwise.

Let $\mathcal{G} = (X,E)$ be hypergraph on $n$ vertices with $m$ hyperedges.
The \textit{incidence matrix}  $B$  of the hypergraph  $\mathcal{G}$ is a matrix of size $n \times m$ such that $b_{x,e} = 1$ if a vertex $x$ is incident to a hyperedge $e$ and  $b_{x,e} = 0$ otherwise.

If $\mathcal{G}$ is a uniform hypergraph, then we can  define its multidimensional adjacency matrix.
The \textit{adjacency matrix} $\mathbb{A}$ of a $d$-uniform hypergraph $\mathcal{G} = (X,E)$ on $n$ vertices is a $d$-dimensional matrix of order $n$ in which entries $a_\alpha$ with index $\alpha = (x_1, \ldots, x_d) \in E$ are equal to $(d-1)!^{-1}$  and all other entries of $\mathbb{A}$ are $0$. Such scale of entries of the matrix $\mathbb{A}$ is taken for the sake of compactness of future expressions.  By definition, the adjacency matrix $\mathbb{A}$ is  symmetric.

If $G$ is a graph, then an \textit{eigenvalue} of $G$ is an eigenvalue of its $2$-dimensional adjacency matrix.  For hypergraphs, there are several ways to define eigenvalues, the most popular approaches use the following:
\begin{enumerate}
\item maximization of some multilinear form generated by a hypergraph;
\item eigenvalues of the $2$-dimensional signless Laplacian matrix $BB^{T}$;
\item eigenvalues of the multidimensional adjacency matrix $\mathbb{A}$.
\end{enumerate}

In this paper we will utilize the last of them. For a brief survey on other definitions of hypergraph eigenvalues see, e.g., the introduction of~\cite{coop.specrandcomp}.
To define eigenvalues of multidimensional matrices, we use the following operation.

 Let  $\mathbb{A}$ be a $d$-dimensional matrix of order $n$  and  $\mathbb{B}$ be a $t$-dimensional matrix of the same order.   Define the \textit{product} $\mathbb{A} \circ \mathbb{B}$ to be the $((d -1) (t -1) +1)$-dimensional  matrix $\mathbb{C}$ of order $n$ with entries 
 $$c_{i, \beta^2, \ldots, \beta^{d}} = \sum\limits_{i_2= 1}^n \cdots \sum\limits_{i_d= 1}^n  a_{i, i_2, \ldots, i_{d}} \cdot b_{i_2, \beta^2}\cdots b_{i_{d}, \beta^{d}},$$
 where indices $\beta^{2}, \ldots, \beta^{d} \in [n]^{t-1}$, $i \in [n]$.

The following properties of the product $\circ$  can be derived directly from the definition or  found in~\cite{shao.tensprod}.

\begin{utv} \label{prodcomute}
 Let $\mathbb{A}$, $\mathbb{B}$, and $\mathbb{C}$ be multidimensional  matrices of appropriate sizes.
\begin{enumerate}
\item If dimensions of matrices $\mathbb{A}$ and $\mathbb{B}$ do not exceed $2$, then $\circ$ is a standard matrix (dot) product: $\mathbb{A} \circ \mathbb{B} = \mathbb{A} \mathbb{B}$.
\item The product of multidimensional matrices is associative:  $(\mathbb{A} \circ \mathbb{B}) \circ \mathbb{C} = \mathbb{A} \circ (\mathbb{B} \circ \mathbb{C}) $.
\item If $\lambda \in \mathbb{R}$ and $\mathbb{A}$ is a $d$-dimensional matrix, then $\mathbb{A} \circ (\lambda \mathbb{B}) = \lambda^{d-1} (\mathbb{A} \circ \mathbb{B}$).
\end{enumerate}
\end{utv}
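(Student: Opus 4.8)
The plan is to verify all three identities by direct substitution into the definition of $\circ$; no external input is needed. The first and third items are immediate, and associativity is the only one that requires genuine bookkeeping, so I would dispatch the easy two first.

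For item (1) I would simply specialise the defining formula. The genuinely two-dimensional case is $d = t = 2$: then the output dimension is $(d-1)(t-1)+1 = 2$, each multi-index $\beta^r \in [n]^{t-1}$ collapses to a single index, and the entry $c_{i,j} = \sum_{i_2} a_{i,i_2} b_{i_2,j}$ is exactly the $(i,j)$ entry of $AB$; the remaining cases in which a dimension equals $1$ are degenerate (a matrix against a vector, and so on) and are read off the same formula. For item (3) I would pull the scalar out of each factor: in $(\mathbb{A}\circ(\lambda\mathbb{B}))_{i,\beta^2,\dots,\beta^d}$ the product $\prod_{r=2}^{d}(\lambda b)_{i_r,\beta^r}$ contributes one copy of $\lambda$ per factor, hence exactly $d-1$ copies, which gives the claimed $\lambda^{d-1}$.

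The main work is item (2). First I would check the dimension count: writing $p = (d-1)(t-1)+1$ and $q = (t-1)(s-1)+1$ for the dimensions of $\mathbb{A}\circ\mathbb{B}$ and $\mathbb{B}\circ\mathbb{C}$ respectively, both $(\mathbb{A}\circ\mathbb{B})\circ\mathbb{C}$ and $\mathbb{A}\circ(\mathbb{B}\circ\mathbb{C})$ have dimension $(d-1)(t-1)(s-1)+1$, so the two objects are at least comparable. For the entries, I would expand $\bigl((\mathbb{A}\circ\mathbb{B})\circ\mathbb{C}\bigr)$ and substitute the definition of $\mathbb{A}\circ\mathbb{B}$. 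The $p-1 = (d-1)(t-1)$ remaining indices of $\mathbb{A}\circ\mathbb{B}$ are organised as $d-1$ blocks $\beta^2,\dots,\beta^d$, each of length $t-1$, and in the outer product each such index is contracted against its own copy of $\mathbb{C}$. After interchanging the (finite) order of summation so that the sums over the $\beta$-indices sit innermost, the key observation is that these sums \emph{factor over} $r = 2,\dots,d$: for fixed $r$ the inner sum involves $\mathbb{B}$ only through $b_{i_r,\beta^r}$ together with the $t-1$ copies of $\mathbb{C}$ attached to the components of $\beta^r$, and it is precisely the entry of $\mathbb{B}\circ\mathbb{C}$ with first index $i_r$. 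Reassembling, the whole expression becomes $\sum_{i_2,\dots,i_d} a_{i,i_2,\dots,i_d}\prod_{r=2}^{d}(\mathbb{B}\circ\mathbb{C})_{i_r,\dots}$, which is exactly the definition of $\mathbb{A}\circ(\mathbb{B}\circ\mathbb{C})$.

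The only real obstacle is notational rather than conceptual: one must track that each $\beta^r$ is itself a tuple and that, in the triple product, it gets replaced by a block of $s-1$ fresh $\mathbb{C}$-indices, so the flat $\bigl((d-1)(t-1)(s-1)\bigr)$-tuple of outer indices must be partitioned consistently on both sides. The mathematical content reduces to distributivity together with the independence of the $d-1$ contractions against copies of $\mathbb{B}$: because those contractions never share a summation index, the sum splits as a product of blocks, and this splitting is precisely what makes the two bracketings agree.
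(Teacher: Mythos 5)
Your verification is correct: items (1) and (3) follow immediately from the defining formula, and your associativity argument — substituting the definition of $\mathbb{A}\circ\mathbb{B}$, interchanging finite sums, and observing that the $d-1$ contractions against $\mathbb{B}$ factor because they share no summation indices, each inner sum being an entry of $\mathbb{B}\circ\mathbb{C}$ — is exactly the direct computation the paper has in mind. The paper itself gives no proof, stating only that these properties ``can be derived directly from the definition or found in~\cite{shao.tensprod}'', so your write-up supplies precisely the omitted verification.
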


Let $\mathbb{A}$ be a $d$-dimensional matrix of order $n$. We will say that $\lambda$ is an \textit{eigenvalue} of $\mathbb{A}$ if there is a vector $x = (x_1, \ldots, x_n)$ such that $A \circ x = \lambda \mathbb{I} \circ x $, where $\mathbb{I}$ is a $d$-dimensional identity matrix of order $n$ and $\mathbb{I} \circ x$ is a vector $(x_1^{d-1}, \ldots, x_n^{d-1})$. The  vector $x$ is called an \textit{eigenvector} corresponding to the eigenvalue $\lambda$,  a pair $(\lambda, x)$ is said to be an \textit{eigenpair} for $\mathbb{A}$.

If $\mathcal{G}$  is a $d$-uniform hypergraph with the $d$-dimensional adjacency $\mathbb{A}$, then the eigenvalues of the matrix  $\mathbb{A}$ are said to be the \textit{eigenvalues} of  $\mathcal{G}$,  the eigenvectors of $\mathbb{A}$ are the \textit{eigenvectors} of $\mathcal{G}$.

The set $V(\lambda) \subset \mathbb{C}^n$ of all eigenvectors for an eigenvalue $\lambda$ is a complex algebraic variety, and the \textit{geometric multiplicity} of the eigenvalue $\lambda$ is the dimension of  $V(\lambda)$. 

The present definition of eigenvalues and eigenvectors for  tensors was proposed in 2005 by Lim~\cite{lim.eigentensor} and was studied for symmetric matrices by Qi~\cite{qi.eigentensor}. They used the theory of determinants of multidimensional matrices and resultants of multilinear systems developed in book~\cite{GeKaZe.multdet} by Gelfand, Kapranov, and Zelevinsky. In particular, they proved that all eigenvalues of a multidimensional matrix $\mathbb{A}$ are roots of its \textit{characteristic polynomial} $\varphi_{\mathbb{A}}$.

\begin{teorema}[\cite{qi.eigentensor}]
For a $d$-dimensional matrix $\mathbb{A}$ of order $n$, there is a characteristic polynomial $\varphi_{\mathbb{A}}$ of degree $n(d-1)^{n-1}$ such that a number $\lambda \in \mathbb{C}$ is an eigenvalue of $\mathbb{A}$ if and only if $\lambda$ is a root of $\varphi_{\mathbb{A}}$.
\end{teorema}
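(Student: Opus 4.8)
The plan is to exhibit $\varphi_{\mathbb{A}}$ as the resultant of the homogeneous polynomial system that encodes the eigenvalue equation, and then to read off its degree in $\lambda$ from the standard multihomogeneity of resultants from~\cite{GeKaZe.multdet}. First I would unfold the definitions of $\circ$ and of $\mathbb{I} \circ x$ to rewrite the eigenpair condition $\mathbb{A} \circ x = \lambda\, \mathbb{I} \circ x$ as the system
$$ F_i(x) \;:=\; \sum_{i_2, \ldots, i_d = 1}^{n} a_{i, i_2, \ldots, i_d}\, x_{i_2} \cdots x_{i_d} \;-\; \lambda\, x_i^{\,d-1} \;=\; 0, \qquad i = 1, \ldots, n. $$
Each $F_i$ is a homogeneous form of degree $d-1$ in $x_1, \ldots, x_n$, and $\lambda$ enters it affinely, multiplying only the coefficient of the monomial $x_i^{\,d-1}$. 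Thus $\lambda$ is an eigenvalue exactly when this square system of $n$ forms of degree $d-1$ in $n$ unknowns admits a nonzero common zero.

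Next I would invoke the resultant. To $n$ homogeneous polynomials in $n$ variables the theory of~\cite{GeKaZe.multdet} associates an irreducible polynomial $\mathrm{Res}(F_1, \ldots, F_n)$ in their coefficients that vanishes if and only if the $F_i$ possess a common nontrivial zero in $\mathbb{C}^n$. Setting
$$ \varphi_{\mathbb{A}}(\lambda) \;:=\; \mathrm{Res}\bigl(F_1, \ldots, F_n\bigr), $$
viewed as a polynomial in $\lambda$ through the coefficients of the $F_i$, the defining property of the resultant gives at once that $\varphi_{\mathbb{A}}(\lambda) = 0$ precisely when the system has a nonzero solution, i.e. precisely when $\lambda$ is an eigenvalue of $\mathbb{A}$. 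This settles the characterization, and only the degree remains.

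For the degree I would use the multihomogeneity of the resultant: with all $n$ forms of the same degree $D = d-1$, $\mathrm{Res}$ is homogeneous of degree $\prod_{j \ne i} D = (d-1)^{n-1}$ in the coefficients of each single form $F_i$. Since $\lambda$ appears linearly in one coefficient of each $F_i$ and in all $n$ of them, $\deg_\lambda \varphi_{\mathbb{A}} \le n(d-1)^{n-1}$. To show equality I would examine the behaviour as $\lambda \to \infty$. Writing $G_i := (\mathbb{A} \circ x)_i$, so that $F_i = G_i - \lambda\, x_i^{\,d-1}$, the scaling rule $\mathrm{Res}(c_1 F_1, \ldots, c_n F_n) = \prod_i c_i^{(d-1)^{n-1}}\, \mathrm{Res}(F_1, \ldots, F_n)$ (again a consequence of multihomogeneity) yields
$$ \varphi_{\mathbb{A}}(\lambda) \;=\; (-\lambda)^{\,n(d-1)^{n-1}}\; \mathrm{Res}\bigl(x_1^{\,d-1} - \lambda^{-1} G_1, \ldots, x_n^{\,d-1} - \lambda^{-1} G_n\bigr). $$
As $\lambda \to \infty$ the resultant on the right tends, by continuity in the coefficients, to $\mathrm{Res}(x_1^{\,d-1}, \ldots, x_n^{\,d-1})$, which is a nonzero constant because the forms $x_i^{\,d-1}$ share only the trivial common zero. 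Hence the leading coefficient of $\varphi_{\mathbb{A}}$ in $\lambda$ is nonzero and $\deg \varphi_{\mathbb{A}} = n(d-1)^{n-1}$ exactly.

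The main obstacle is not the bookkeeping above but the input from~\cite{GeKaZe.multdet}: the very existence of the resultant, its vanishing criterion, and the precise degree formula $\prod_{j \ne i} D_j$ for its degree in the coefficients of each form. Granting that machinery, the only substantive check is the nonvanishing of the leading term, which I reduced to the elementary fact that $x_1^{\,d-1}, \ldots, x_n^{\,d-1}$ have no common nonzero root; the scaling identity in Proposition~\ref{prodcomute}(3) can serve as a sanity check on the homogeneity bookkeeping in $\lambda$.
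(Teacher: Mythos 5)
The paper does not prove this statement at all---it is imported verbatim from \cite{qi.eigentensor}---so there is no in-paper argument to compare against; your proposal is, in substance, exactly Qi's original proof: define $\varphi_{\mathbb{A}}$ as the resultant of the forms $(\mathbb{A}\circ x)_i-\lambda x_i^{d-1}$, use the GKZ vanishing criterion for the equivalence with the existence of a nontrivial solution, and get the degree $n(d-1)^{n-1}$ from the multihomogeneity of the resultant together with the nonvanishing of $\mathrm{Res}(x_1^{d-1},\ldots,x_n^{d-1})$. The argument is correct as written.
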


The \textit{algebraic multiplicity} of an eigenvalue $\lambda$ is said be its multiplicity as a root of the characteristic polynomial. It is well known that for $2$-dimensional matrices geometric and algebraic multiplicities coincide. Unfortunately, in the multidimensional case it is not true. For  more information on relations between these multiplicities see~\cite{HuYe.multeigen}.

The study of eigenvalues of multidimensional matrices and hypergraphs grows now extensively, so we mention here only several papers. In  \cite{CoopDut.hyperspec} Cooper and Dutle  proved some properties of hypergraph spectra and calculate eigenvalues of certain classes of hypergraphs.   In \cite{shao.tensprod} it was introduced several products of hypergraphs and multidimensional matrices and then considered the eigenvalues of the result.  Algebraic properties of eigenvalues and determinats of multidimensional matrices with respect to various matrix operations were studied in \cite{ShaoShanZhang.detoftens}. At last, for some survey on the spectral theory of nonnegative matrices see~\cite{ChaQiZhang.survspecthyper}.

\subsection{Perfect colorings and coverings of graphs}

Let us review some basic definitions and facts  on perfect colorings of graphs that we aim to generalize for hypergraphs. Most of them can be found in~\cite{my.perfstrct}.

A \textit{coloring} of a graph $G = (X,E)$ in $k$ colors is a surjective function $f: X \rightarrow \{1, \ldots, k \}$. To every $k$-coloring $f$, we can put into a correspondence a rectangular \textit{color matrix $P$} of size $|X| \times k$ with entries $p_{x,i} = 1$ if $f(x) = i$ and $p_{x,i} = 0$ otherwise. Note that each row of $P$ contains exactly one nonzero entry. A coloring of a graph parts its vertex set $X$ into disjoint \textit{color classes} $\{C_1, \ldots, C_k \}$, where $C_i  = \{ x: f(x) = i \}$.  In what follows, we usually define a coloring with the help of the color matrix $P$, but in some cases we also refer to the coloring function $f$ and the color classes $\{C_1, \ldots, C_k \}$. 

If $G$ is a bipartite graph and $f$ is a coloring of $G$, we will say that $f$ is a \textit{bipartite} coloring if there are no color classes of $f$ that intersect both parts of $G$.

A $k$-coloring $f$ of a graph $G$ is called \textit{perfect} if there exist integer $s_{i,j}$, $i,j \in \{ 1, \ldots, k\}$,  such that every vertex of color $i$ in $f$ is adjacent to exactly $s_{i,j}$ vertices of color $j$. The matrix $S = (s_{i,j})$ of order $k$ is called the \textit{parameter matrix} of the perfect coloring. 

It is not hard to see that if $A$ is the adjacency matrix of a graph $G$, then $P$ is a perfect coloring with the parameter matrix $S$ if and only if $AP = PS$. Moreover, if $y$ is an eigenvector of the parameter matrix $S$ with an eigenvalue $\lambda$, then $Py$ is the eigenvector of $A$ with the same eigenvalue $\lambda$. 

Given a coloring $P$, let us denote by $N$ the matrix $P^T P$. It can be checked that $N$ is a diagonal  matrix with entries $n_{i,i}$ equal to the number of vertices of color $i$ and that the matrix $NS = P^TPS$ is symmetric.

We will say that a coloring $f$ of a graph $G$ with color classes $\{ C_1, \ldots, C_k\}$ is a \textit{refinement} of a coloring $g$ with color classes $\{ D_1, \ldots, D_l\}$ if each color class $D_i$ is a union of some classes $C_j$. For a given graph $G$, its colorings (and perfect colorings) compose a partially ordered set with respect to the refinement.

For every coloring of a graph, there is the unique coarsest refinement that will be a perfect coloring. The method of finding such refinements is known as Weisfeiler--Leman algorithm~\cite{WeiLeh.algo}.

\begin{teorema}[\cite{WeiLeh.algo}] \label{WLgraph}
Given a graph $G$, there is a perfect coloring $f$ such that every other perfect coloring of $G$ is a refinement of $f$. Moreover, for every coloring $g$ of $G$ there is a refinement $h$ of $g$ such that $h$ is a perfect coloring, and any other perfect coloring that refines $g$ is a refinement of $h$. 
\end{teorema}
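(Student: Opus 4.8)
The plan is to prove this via the color-refinement (Weisfeiler--Leman) procedure, viewing colorings as partitions of $X$ ordered by refinement so that relabeling of colors is irrelevant. The key device is a refinement operator $R$: given a coloring $f$, I declare two vertices $x,y$ equivalent under $R(f)$ precisely when $f(x)=f(y)$ and, for every color $i$ of $f$, the vertices $x$ and $y$ have the same number of $f$-neighbors of color $i$. By construction $R(f)$ is a refinement of $f$, and since refinement is transitive, $R(f)$ still refines $g$ whenever $f$ does. The crucial tautology is that $f$ is a perfect coloring if and only if $R(f)=f$: the equality says exactly that any two equally colored vertices share the same neighborhood color-count vector, which is the defining condition for the parameter matrix $S$ to exist.

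Next I would establish termination. Starting from the given coloring $g$, form the chain $g,\,R(g),\,R^{2}(g),\,\ldots$. Each step produces a refinement, so the number of color classes is nondecreasing, and it is bounded above by $|X|$. Hence the chain stabilizes: there is $t$ with $R^{t+1}(g)=R^{t}(g)$. Setting $h=R^{t}(g)$ gives $R(h)=h$, so by the tautology above $h$ is a perfect coloring, and it refines $g$ since every term of the chain does.

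The substantive part is maximality: any perfect coloring $p$ that refines $g$ must refine $h$. I would prove by induction on $s$ that $p$ refines $R^{s}(g)$, the base case $s=0$ being the hypothesis that $p$ refines $g$. For the inductive step, suppose $p$ refines $f:=R^{s}(g)$ and take $x,y$ with $p(x)=p(y)$. Because $p$ refines $f$, the classes of $f$ are unions of classes of $p$; and since $p$ is perfect, $x$ and $y$ have equal numbers of $p$-neighbors of each $p$-color. Summing these counts over the $p$-classes contained in a fixed $f$-class $i$ shows that $x$ and $y$ have the same number of $f$-neighbors of color $i$, for every $i$. Together with $f(x)=f(y)$ this yields $R(f)(x)=R(f)(y)$, so $p$ refines $R(f)=R^{s+1}(g)$, completing the induction. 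Taking $s=t$ gives that $p$ refines $h$.

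Finally, the first assertion follows by specializing $g$ to the one-class coloring of $X$: every coloring, in particular every perfect coloring, refines it, so by the above there is a perfect coloring $f$ refined by every perfect coloring of $G$. The routine parts are the termination bound and the tautology characterizing stability; the main obstacle is the inductive maximality step, where one must argue that perfectness of $p$ propagates the neighborhood-count equalities from the $p$-coloring up to the coarser coloring $f$ --- this is exactly where the aggregation of $p$-color counts into $f$-color counts is used.
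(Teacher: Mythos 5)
The paper does not prove this statement itself: it is quoted as a classical result with a citation to Weisfeiler--Leman, and only its hypergraph corollary (Theorem~\ref{WLhyper}) is derived from it. Your argument is the standard color-refinement proof of exactly that classical result --- the stabilization of the operator $R$ by a counting bound on the number of classes, the fixed-point characterization of perfect colorings, and the inductive aggregation step showing that a perfect refinement of $g$ survives every application of $R$ --- and it is correct as written.
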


A graph $G = (X,E)$ \textit{covers} a graph $H = (Y,U)$ if there exists a surjective function $\varphi : X \rightarrow Y$ such that for each $x \in X$ the equality $\{\varphi(x')|(x, x') \in E\} = \{y|(y, \varphi(x)) \in U \}$ holds. 
A covering $\varphi$ of a graph $H$ by $G$  is a \textit{$k$-covering}  if for every $y \in Y$ there are exactly $k$ vertices $x \in X$ such that $\varphi(x) = y$. It is well known that every covering is a $k$-covering for some $k \in \mathbb{N}$. 

It is not hard to see that a graph $G$ covers a graph $H$ if and only if there is a perfect coloring of $G$ with the parameter matrix equal to the adjacency matrix of $H$. It implies, for example, the following simple properties of coverings:
\begin{itemize}
\item If a graph $G$ covers a graph $H$ and a graph $H$ covers a graph $F$, then $G$ covers $F$.
\item If a graph $G$ covers a graph $H$ and $\lambda$ is an eigenvalue of $H$, then $\lambda$ is an eigenvalue of $G$.
\end{itemize}

In~\cite{leighton.comcover, AngGard.comcover}, it was proved that if graphs $H_1$ and $H_2$ have perfect colorings with the same parameter matrices, then there exists a graph $G$ that covers both $H_1$ and $H_2$. 

 For more information of graph coverings and their applications see~\cite{GodRoy.alggrath}.

\section{Perfect colorings of hypergraphs} \label{main-section}

There are two equivalent ways to define perfect colorings in uniform hypergraphs. The first of them uses a perfect coloring of the incidence graph, and the second one is based on an equation for  the multidimensional adjacency matrix of a hypergraph.  The first approach  is more general and can be applied to non-uniform hypergraphs.  Although  it appeared  in~\cite{GraGroLo.handbook} and~\cite{PotAv.hypercolor},  the parameters of such colorings were not discussed before. The approach based on multidimensional matrices is completely new.  In this section we also study   the interrelations between these approaches.

\subsection{Colorings of the incidence graph}

Let $\mathcal{G} (X,E)$ be a hypergraph.  We will say that a surjective function $f: X \rightarrow \{1, \ldots, k \}$ is a \textit{coloring}  of $\mathcal{G}$ into $k$ colors (or \textit{$k$-coloring}). In other words, $f$ defines a partition of the set $X$ in $k$ \textit{color classes}.  Given a coloring $f$ and a hyperedge $e$, let the \textit{color range} $f(e)$  be the multiset of colors of all incident vertices: $f(e) = \{ f(x) | x \in e \}  $.

Let $G$ be the incidence graph of the hypergraph $\mathcal{G}$.
To  each coloring  $f$ of $\mathcal{G}$, we associate an \textit{induced}  coloring $g$ of $G$ such that for every $x\in X$ we have $g(x) = f(x)$ and for every $e \in E$ we define $g(e) = f(e)$. Here   the color ranges $f(e)$ of hyperedges   are considered as  colors of the coloring $g$.  Note that every induced coloring of $G$ is bipartite, but not every bipartite coloring of $G$ is induced by some coloring of $\mathcal{G}$.

A coloring $f$ of a hypergraph $\mathcal{G}$ is said to be \textit{perfect} if each two vertices $x$ and $y$ of the same color have the same set of color ranges of incident hyperedges. Directly from the definitions, it follows  that $f$ is a perfect coloring of $\mathcal{G}$ if and only if  the induced bipartite coloring $g$ of $G$ is perfect.

We will say that  a rectangular $(0,1)$-matrix $P$ is a \textit{color matrix} if every row of $P$ contains exactly one $1$.
To every $k$-coloring $f$ of a hypergraph $\mathcal{G}(X,E)$ on $n$ vertices, we associate the color matrix  $P$ of size $n \times k$ such that $p_{x,i} = 1$ if $f(x) = i$ and $p_{x,i} = 0$ otherwise.

Assume that a hypergraph $\mathcal{G} (X,E)$ has $n$ vertices, $m$ hyperedges, and the incidence matrix $B$. Let $f$ be a perfect $k$-coloring of  $\mathcal{G}$ such that the number of different color ranges $\gamma$ of hyperedges is equal to $l$.   Then  the induced coloring $g$ of the incidence graph $G$ gives us  the following matrix equation:
$$
\left( \begin{array}{cc} 0 & B \\ B^T & 0 \end{array} \right)
\left( \begin{array}{cc} 0 & P \\ R & 0 \end{array} \right) = 
\left( \begin{array}{cc} 0 & P \\ R & 0 \end{array} \right) 
\left( \begin{array}{cc} 0 & W \\ V & 0 \end{array} \right)
$$
or, equivalently,
\begin{equation} \label{percoldef} 
B R = P V; ~~~ B^T P = R W, 
\end{equation}
where
\begin{itemize}
\item $P$ and $R$ are vertex and  hyperedge color matrices of sizes $n \times k$ and $m \times l$, respectively; the coloring $R$ is \textit{induced} by the coloring $P$.
\item $V$ and $W$ are  matrices of sizes  $k \times l$ and $l \times k$, respectively. An entry $v_{i,\gamma}$ of $V$ is equal to the number of hyperedges of color range $\gamma$ in $\mathcal{G}$ incident to a vertex of color $i$, entry $w_{\gamma,i}$ of $W$ is equal to the  number of vertices of color $i$ in $\mathcal{G}$ contained in  a hyperedge of color  range $\gamma$. 
\end{itemize}

We will say that the pair $(V,W)$ is the \textit{incidence parameters} of the perfect coloring $f$: the matrix $V$ is said to be the \textit{XE-parameter matrix} (describing incidence of vertices $x$ to hyperedges $e$), and $W$ is the \textit{EX-parameter matrix}   (describing incidence of hyperedges $e$ to vertices $x$).

\textbf{Example 1.}  Let $\mathcal{G}$ be a $3$-uniform hypergraph  with the vertex set $X = \{ x_1, x_2, x_3, x_4, x_5, x_6\}$ and the hyperedge set $E = \{ e_1, e_2, e_3, e_4\}$, where $e_1 = \{ x_1, x_2, x_3\}$, $e_2 = \{ x_1, x_4, x_5\}$,  $e_3 = \{ x_2, x_4, x_6\}$,  $e_4 = \{ x_3, x_5, x_6\}$.  

Let $f$ be a coloring of $X$ into two colors such that $f(x_1) = f(x_2) = f(x_3) = 1$ and  $f(x_4) = f(x_5) = f(x_6) = 2$. Then the color ranges of hyperedges are $f(e_1) = \{ 1, 1, 1\}$ and $f(e_2) = f(e_3)  = f(e_4)= \{ 1, 2, 2\}$. Note that each vertex of color $1$ is incident to one hyperedge of color range $\{ 1, 1, 1\}$ and one hyperedge of color range $\{ 1, 2, 2\}$, and each vertex of color $2$ is incident to two hyperedges of color range $\{ 1, 2, 2\}$. Therefore, $f$ is a perfect coloring of $\mathcal{G}$.

The incidence graph $G$ of $\mathcal{G}$ with the induced perfect coloring $g$ is given at Figure~1.

\begin{center}
\includegraphics[width=0.2\linewidth]{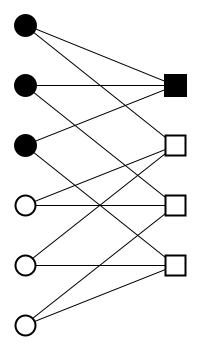}  

Figure 1: The perfect coloring $g$ of the  incidence graph $G$ for the hypergraph $\mathcal{G}$
\end{center}

The incidence matrix of the hypergraph $G$ is
$$B = \left( 
\begin{array}{cccc}
1 & 1 & 0 & 0 \\
1 & 0 & 1 & 0 \\
1 & 0 & 0 & 1 \\
0 & 1 & 1 & 0 \\
0 & 1 & 0 & 1 \\
0 & 0 & 1 & 1 \\
\end{array}
\right).$$

By the definition, the vertex color matrix $P$ and the hyperedge color matrix $R$  for  the perfect  coloring $f$ are
$$ P = 
\left(  \begin{array}{cc}
1 & 0 \\ 1 & 0 \\ 1 & 0 \\
0 & 1 \\ 0 & 1 \\ 0 & 1
\end{array} \right); ~~~
R = \left( \begin{array}{cc}
1 & 0 \\ 0 & 1  \\ 0 & 1 \\ 0 & 1 
\end{array} \right),$$
and the incidence parameters $(V, W)$ are
$$V = \left( 
\begin{array}{cc}
1 & 1 \\ 0 & 2 
\end{array}
\right); ~~~ 
W = 
\left( \begin{array}{cc}
3 & 0 \\ 1 & 2 
\end{array} \right).$$
It can be checked directly that $BR = PV$ and $B^TP  = RW$. 
\bigskip

Let us derive several simple observations from the given definition of a perfect coloring of hypergraphs.

First of all, if vertices $x$ and $y$ of a hypergraph $\mathcal{G}$ have different degrees, then $f(x) \neq f(y)$  for every  perfect coloring $f$ of $\mathcal{G}$ because the same is true for perfect colorings of graphs.  The sum of entries in the $i$-th row of the XE-parameter matrix $V$ is equal to the degree of vertices of color $i$, and the sum of entries in the $\gamma$-th row of the EX-parameter matrix $W$ is equal to the size of a hyperedge of the color range $\gamma$.  

Next, if $P$ is a coloring of vertices and $R$ is a coloring of hyperedges, then  $N = P^T P$ and $M = R^T R$ are diagonal matrices. Entries $n_{i,i}$ of  $N$ are equal to the numbers of vertices colored by color $i$ in $P$, and entries  $m_{j,j}$ of  $M$ are the numbers of hyperedges colored by color $j$ in $R$.

We also prove the following results on the structure of the incidence matrix $B$ and a relation between the incidence parameters $(V,W)$ and matrices $N$ and $M$.

\begin{teorema} \label{VWblock}
\begin{enumerate}
\item  Let $\mathcal{G}$ be a hypergraph with the incidence matrix $B$, $P$ be a coloring of $\mathcal{G}$, $R$ be an induced coloring of hyperedges, $N = P^T P$ and $M = R^T R$ be the diagonal matrices with entries $n_i$ and $m_j$ at the diagonal. If a coloring $P$ is perfect for $\mathcal{G}$ with incidence parameters $(V,W)$, then  $ N V =W^T M$ and $B$ is a  block matrix $\{ A_{i,j}\}$, where $A_{i,j}$ are $(0,1)$-matrices of sizes $n_i \times m_j$  with row sums $v_{i,j}$  and column sums $w_{j,i}$.
\item Let $V$ and $W$ be integer nonnegative matrices and $N$ and $M$ be integer diagonal matrices with $n_{i} > 0$ and $m_{j} >0$ such that $ N V =W^T M$. Then there exist a hypergraph $\mathcal{G}$ and its perfect coloring $P$ having incidence parameters $(V,W)$. 
\end{enumerate}
\end{teorema}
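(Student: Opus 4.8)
Both claims of part~1 follow directly from the defining equations~\eqref{percoldef}. For the identity $NV = W^T M$, I would left-multiply $BR = PV$ by $P^T$ to get $P^T B R = P^T P V = NV$, and transpose $B^T P = RW$ to get $P^T B = W^T R^T$, so that right-multiplication by $R$ gives $P^T B R = W^T R^T R = W^T M$; comparing the two expressions for $P^T B R$ yields $NV = W^T M$. Concretely this is the double count $n_i v_{i,j} = m_j w_{j,i}$ of incident pairs consisting of a color-$i$ vertex and a range-$j$ hyperedge. For the block structure, order the rows of $B$ by vertex color and its columns by hyperedge color range, so that $B = \{A_{i,j}\}$ with $A_{i,j}$ the $n_i \times m_j$ incidence matrix between color-$i$ vertices and range-$j$ hyperedges. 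Each row of $A_{i,j}$ sums to $v_{i,j}$ by perfectness of the coloring, and each column sums to $w_{j,i}$ directly from the definition of a color range; these are exactly the block forms of $BR = PV$ and $B^T P = RW$.

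For the converse I would construct the incidence matrix $B$ block by block. First note that positivity of $N$ and $M$ together with the entrywise identity $n_i v_{i,j} = m_j w_{j,i}$ force $v_{i,j} = 0$ exactly when $w_{j,i} = 0$. Choose an integer scaling factor $t$ and let color class $i$ consist of $t n_i$ vertices and color range $j$ of $t m_j$ hyperedges; this scaling preserves $NV = W^T M$ and leaves the target parameters $(V,W)$ unchanged. For each pair $(i,j)$ I then need a $(0,1)$-matrix $A_{i,j}$ of size $t n_i \times t m_j$ all of whose rows sum to $v_{i,j}$ and all of whose columns sum to $w_{j,i}$. Such a biregular incidence matrix exists exactly when the margins are consistent, $(t n_i) v_{i,j} = (t m_j) w_{j,i}$, and fit, $v_{i,j} \le t m_j$ and $w_{j,i} \le t n_i$; the consistency is the scaled $(i,j)$ entry of $NV = W^T M$, and both inequalities hold once $t$ is large, so I fix such a $t$.

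Assembling $B = \{A_{i,j}\}$ yields the incidence matrix of a (multi)hypergraph $\mathcal{G}$, and coloring every vertex by its block index gives the coloring $P$. By construction each color-$i$ vertex lies in $v_{i,j}$ hyperedges of range $j$, and each range-$j$ hyperedge contains $w_{j,i}$ vertices of color $i$, so its color range is the multiset encoded by the $j$-th row of $W$; hence $P$ is perfect with incidence parameters $(V,W)$. I would assume, without loss of generality, that the rows of $W$ are pairwise distinct and nonzero, so that the $l$ prescribed ranges are genuinely distinct and all hyperedges nonempty; should a simple hypergraph be required in place of a multihypergraph, distinct columns can be arranged by taking $t$ large and distributing the ones suitably within each block. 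I expect the realizability of the blocks to be the only real obstacle, resting on the standard existence result for $(0,1)$-matrices with prescribed constant row and column sums (the relevant special case of the Gale--Ryser theorem), while the remaining verifications are routine bookkeeping.
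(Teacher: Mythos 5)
Your proposal is correct and follows essentially the same route as the paper: part~1 is the same manipulation of the equations $BR=PV$, $B^TP=RW$ by $P^T$ and $R^T$ (the paper multiplies both and observes the two left-hand sides are transposes of one another, which is the same computation you do), and part~2 is the same block-by-block construction of $B=\{A_{i,j}\}$ from biregular $(0,1)$-matrices. The one substantive difference is your scaling factor $t$: the paper asserts directly from $n_i v_{i,j} = w_{j,i} m_j$ that an $n_i\times m_j$ $(0,1)$-matrix with row sums $v_{i,j}$ and column sums $w_{j,i}$ exists, which silently also requires $v_{i,j}\le m_j$ and $w_{j,i}\le n_i$; these fit conditions do not follow from $NV=W^TM$ alone (e.g.\ $n_1=m_1=1$, $v_{1,1}=w_{1,1}=2$), so your enlargement of the color classes by a common factor $t$ is a genuine and welcome repair rather than mere bookkeeping. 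Your remaining caveats (distinct nonzero rows of $W$, multihyperedges versus simple hyperedges) concern the same degenerate cases the paper also passes over.
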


\begin{proof}
(1). Assume that $P$ is a perfect coloring of $\mathcal{G}$.  Recall that equations~(\ref{percoldef}) for perfect colorings give 
$$B R = P V; ~~~ B^T P = R W.$$
Multiplying  the first equation by $P^T$  and the second equation by $R^T$, we have
$$ P^T B R = P^T P V = N V; ~~~ R^TB^T P = R^T R W = M W.$$
Note that the left-hand side of one equation is the transposition of the other, so $ N V =W^T M$. On the other hand, these equations mean that matrices $P$ and $R$ define a partition of the incidence matrix $B$ into blocks $A_{i,j}$, where $A_{i,j}$ has sizes $n_i \times m_j$, row sums $v_{i,j}$,  and column sums $w_{j,i}$.

(2). Equality $N V = W^T M$ means that $n_i v_{i,j} = w_{j,i} m_j$ for all $i$ and $j$. Choose a nonnegative integer $t$ so that for all $i$ and $j$ there exist  $(0,1)$-matrices  $A_{i,j}$ of sizes $tn_i \times tm_j$ such that each row of $A_{i,j}$ contains exactly $v_{i,j}$ ones and each column contains exactly $w_{j,i}$ ones.  

Let a rectangular matrix  $B $ is given by  the block matrix $ \{ A_{i,j}\}$. We treat $B$  as  the incidence matrix of a hypergraph $\mathcal{G}$.
It can be verified directly that a $k$-coloring of $\mathcal{G}$
$$P = \left(  \begin{array}{ccc}
1  & \cdots & 0  \\
\vdots  & \vdots & \vdots \\
1  & \cdots & 0  \\
\vdots & \vdots & \vdots \\ 
0  & \cdots & 1  \\
\vdots & \vdots & \vdots \\
0 & \cdots & 1  \\
\end{array} \right),$$
where the $i$-th column contains exactly $tn_i$ ones, is a perfect coloring  with the incidence parameters $(V,W)$.  
\end{proof}

At last, we define refinements of perfect colorings in hypergraphs  similar to graphs.   We will say that a  coloring $f$ of a hypergraph $\mathcal{G}$  is a \textit{refinement} of a coloring $g$ if the coloring of the incidence graph of $\mathcal{G}$ induced by $f$ is a refinement of the  coloring induced by $g$.  The set of perfect colorings of a hypergraph with respect to the refinement forms a partially ordered set. 
Theorem~\ref{WLgraph} implies the following.

\begin{teorema} \label{WLhyper}
Given a hypergraph $\mathcal{G} = (X,E)$, there is a perfect coloring $f$ of $\mathcal{G}$  such that any other perfect coloring is a refinement of $f$. Moreover, for every coloring $g$ of $\mathcal{G}$ there is a refinement $h$ of $g$ such that $h$ is a perfect coloring and any other perfect coloring that refines $g$ is a refinement of $h$. 
\end{teorema}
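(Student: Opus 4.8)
The plan is to reduce the statement to its graph analogue, Theorem~\ref{WLgraph}, applied to the incidence graph $G$ of $\mathcal{G}$. The bridge is the correspondence already recorded above: a coloring $f$ of $\mathcal{G}$ is perfect if and only if its induced coloring of $G$ is perfect, and, by definition, one hypergraph coloring refines another exactly when the associated induced colorings refine one another in $G$. Since every coloring of $\mathcal{G}$ refines the monochromatic coloring $t$ (whose single color class is all of $X$), the first assertion is the special case $g = t$ of the second; so I would prove only the relative statement and read off the first.

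Given a coloring $g$ of $\mathcal{G}$, I would let $g_1$ denote its induced coloring of $G$ and invoke the second part of Theorem~\ref{WLgraph} to obtain the coarsest perfect coloring $h_1$ of the graph $G$ that refines $g_1$. Because $g_1$ is bipartite (no color class meets both $X$ and $E$), so is every refinement of it, and in particular $h_1$. The whole content of the proof is then to check that $h_1$ is actually \emph{induced}, i.e.\ that it is the induced coloring of the hypergraph coloring $h := h_1|_X$. Once this is known, the correspondence makes $h$ a perfect coloring of $\mathcal{G}$ refining $g$; and any perfect coloring $f$ of $\mathcal{G}$ refining $g$ induces, by definition of hypergraph refinement, a perfect coloring of $G$ refining $g_1$, which by the universality of $h_1$ refines $h_1$, so that $f$ refines $h$, as required.

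The main obstacle is establishing that $h_1$ is induced, which amounts to showing that two hyperedges get the same $h_1$-color precisely when they have the same color range under $h_1|_X$. One direction is immediate from perfectness: if $h_1$ is a bipartite perfect coloring, then every hyperedge of a fixed color $\gamma$ contains exactly $w_{\gamma,i}$ vertices of each color $i$, so all hyperedges of color $\gamma$ share the color range $\{\, i^{w_{\gamma,i}} \,\}$. For the reverse direction I would argue by minimality: suppose two hyperedge colors $\gamma \neq \gamma'$ share the same color range, i.e.\ $w_{\gamma,i} = w_{\gamma',i}$ for every $i$, and merge them into a single color to form a coarser coloring $h'$. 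A direct check shows $h'$ is still perfect --- on the vertex side each vertex of color $i$ simply acquires $v_{i,\gamma} + v_{i,\gamma'}$ incident hyperedges of the merged color, a number independent of the vertex, while on the hyperedge side the merged class is homogeneous because its two parts had equal color ranges. Moreover, since $h_1|_X$ refines the vertex coloring $g$, equal color ranges under $h_1|_X$ force equal color ranges under $g$, so $\gamma$ and $\gamma'$ lie in one color class of $g_1$ and $h'$ still refines $g_1$. Thus $h'$ is a perfect refinement of $g_1$ strictly coarser than $h_1$, contradicting the universality of $h_1$. Hence no such pair exists, $h_1$ is induced, and the reduction of the previous paragraph finishes the proof; taking $g = t$ yields the first assertion.
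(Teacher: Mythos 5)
Your proposal follows the same route as the paper: reduce to Theorem~\ref{WLgraph} applied to the incidence graph, deriving the first assertion as the special case of the second. The difference is one of rigor rather than strategy. The paper's proof is two sentences and silently assumes that the coarsest perfect refinement $h_1$ produced by Weisfeiler--Leman on the incidence graph is actually \emph{induced} by a hypergraph coloring, i.e.\ that its hyperedge classes are exactly the color-range classes of its vertex part --- without this, $h_1$ does not translate back into a coloring of $\mathcal{G}$ and the statement does not follow. You correctly isolate this as the crux and close it with a clean minimality argument: merging two hyperedge colors with identical color ranges preserves perfectness (the vertex-side counts add, the hyperedge-side counts agree by hypothesis) and preserves refinement of $g_1$ (since $h_1|_X$ refines $g$, equal $h_1$-color-ranges force equal $g$-color-ranges), so a failure of inducedness would yield a perfect refinement of $g_1$ strictly coarser than $h_1$, contradicting its universality. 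Your argument is sound and supplies exactly the verification the paper omits.
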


\begin{proof}
To construct the coloring $f$, it is sufficient to apply Theorem~\ref{WLgraph} to the trivial bipartite coloring of the incidence graph $G$, in which the part $X$ is colored by one color, and  the part $E$ is colored by another  color.

The second statement of the theorem is obtained by the application of Theorem~\ref{WLgraph} to the coloring of the incidence graph induced by the coloring $g$. 
\end{proof}

We will say that the coloring $f$ from Theorem~\ref{WLhyper} is the \textit{minimal perfect coloring} of the hypergraph $\mathcal{G}$.

\subsection{The multidimensional matrix equation for perfect colorings}

 Let $\mathcal{G} (X,E)$ be a $d$-uniform hypergraph on $n$ vertices, $\mathbb{A}$ be the $d$-dimensional adjacency matrix of $\mathcal{G}$, and $P$ be a coloring of $\mathcal{G}$ into $k$ colors (color matrix of vertices).  We will say that  $P$    is \textit{perfect}   if  there exists a $d$-dimensional matrix $\mathbb{S}$ of order $k$  such that 
$$\mathbb{A} \circ  P = P \circ \mathbb{S},$$
where the product $\circ$ of multidimensional matrices is defined in Section~\ref{prelim-section}.
The matrix $\mathbb{S}$ is called the \textit{parameter matrix} of the perfect coloring $P$.

Let us show that this definition of perfect colorings in case of uniform hypergraphs is equivalent to the previous one. Moreover, we express entries of the parameter matrix $\mathbb{S}$ by the incidence parameters $(V,W)$.

\begin{teorema} \label{adjparam}
Let $\mathcal{G}$ be $d$-uniform hypergraph with the $d$-dimensional adjacency matrix $\mathbb{A}$. Then a coloring  $P$ of $\mathcal{G}$  into $k$ colors is a perfect  if and only if $\mathbb{A}  \circ P = P \circ \mathbb{S} $. Moreover,  the entries $s_{\gamma}$, $\gamma = (\gamma_1, \gamma_2, \ldots, \gamma_{d} )$, of $\mathbb{S}$ are
$$s_{\gamma}  = v_{\gamma_1,\gamma} \cdot {d-1 \choose  d_1,  \ldots, d_k}^{-1},$$
where $v_{\gamma_1,\gamma} $ is the number of hyperedges of color range $\gamma$  incident to a vertex of color $\gamma_1$, every color $l$ appears in the multiset $ \{\gamma_2, \ldots, \gamma_d \}$ exactly $d_l$ times, and  ${d-1 \choose  d_1,  \ldots, d_k}$ is the multinomial coefficient.
\end{teorema}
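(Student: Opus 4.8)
The plan is to compute both sides of $\mathbb{A}\circ P=P\circ\mathbb{S}$ entrywise from the definition of the product $\circ$ in Section~\ref{prelim-section}. Since $P$ is a $2$-dimensional $n\times k$ color matrix, $\mathbb{A}\circ P$ is $d$-dimensional of mixed format: its first index $i$ runs over the $n$ vertices and the remaining $d-1$ indices $c_2,\dots,c_d$ run over the $k$ colors. Expanding the defining sum and using $p_{x,c}=1\iff f(x)=c$, I obtain
$$(\mathbb{A}\circ P)_{i,c_2,\dots,c_d}=\frac{1}{(d-1)!}\,\bigl|\{(x_2,\dots,x_d):\{i,x_2,\dots,x_d\}\in E,\ f(x_j)=c_j\ \text{for all } j\}\bigr|.$$
An analogous but shorter computation for the right-hand side, again exploiting that $P$ is a color matrix, collapses the single contraction to $(P\circ\mathbb{S})_{i,c_2,\dots,c_d}=s_{f(i),c_2,\dots,c_d}$. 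Thus the matrix equation is equivalent to the scalar identities $(\mathbb{A}\circ P)_{i,c_2,\dots,c_d}=s_{f(i),c_2,\dots,c_d}$ holding for every vertex $i$ and every pattern $(c_2,\dots,c_d)$.

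Next I would evaluate the counting factor on the left. Each hyperedge $e\ni i$ contributes exactly the $(d-1)!$ orderings of $e\setminus\{i\}$, each carrying an entry $(d-1)!^{-1}$ of $\mathbb{A}$; of these orderings, precisely $\prod_l d_l!$ match the prescribed pattern when the color multiset of $e\setminus\{i\}$ equals $\{c_2,\dots,c_d\}$ (color $l$ occurring $d_l$ times), and none otherwise. Summing over $e$ gives
$$(\mathbb{A}\circ P)_{i,c_2,\dots,c_d}=\frac{\prod_l d_l!}{(d-1)!}\,N_i(c_2,\dots,c_d),$$
where $N_i(c_2,\dots,c_d)$ is the number of hyperedges through $i$ whose remaining vertices realize the color multiset $\{c_2,\dots,c_d\}$. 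The crucial observation is that, writing $c_1=f(i)$, such a hyperedge has color range $\gamma=\{c_1\}\uplus\{c_2,\dots,c_d\}$, so $N_i$ is exactly the number of hyperedges incident to $i$ with color range $\gamma$, i.e.\ the incidence parameter $v_{f(i),\gamma}$ of~(\ref{percoldef})---provided that parameter is well defined.

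This is where both halves of the theorem come together. The equation $\mathbb{A}\circ P=P\circ\mathbb{S}$ has a solution $\mathbb{S}$ of order $k$ if and only if, for every color and every pattern, the value $N_i$ depends on $i$ only through $f(i)$ (the prefactor already being vertex-independent); since $f$ is surjective this then fixes every entry of $\mathbb{S}$. But ``$N_i$ depends only on $f(i)$ and the multiset $\{c_2,\dots,c_d\}$'' says exactly that any two equally colored vertices meet the same number of incident hyperedges of each color range, which is the definition of a perfect coloring through the incidence graph. This yields the equivalence. Specializing to a vertex of color $\gamma_1$ with pattern $(\gamma_2,\dots,\gamma_d)$ gives $s_\gamma=\frac{\prod_l d_l!}{(d-1)!}\,v_{\gamma_1,\gamma}$, and the identity $\prod_l d_l!/(d-1)!=\binom{d-1}{d_1,\dots,d_k}^{-1}$ converts this into the stated formula.

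The main obstacle I expect is the bookkeeping of the combinatorial factor: one must keep the tuple-indexed sum in the product $\circ$ separate from the set-indexed hyperedges of $\mathcal{G}$, count correctly the $\prod_l d_l!$ orderings compatible with a pattern that may repeat colors, and check that the chosen scaling $(d-1)!^{-1}$ of $\mathbb{A}$ combines with this count to produce precisely the inverse multinomial coefficient. Once these are handled, the remaining arguments are direct translations between the contraction formula and the incidence-parameter description already established in~(\ref{percoldef}) and Theorem~\ref{VWblock}.
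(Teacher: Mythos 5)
Your proposal is correct and takes essentially the same route as the paper's proof: both sides of $\mathbb{A}\circ P=P\circ\mathbb{S}$ are expanded entrywise from the definition of $\circ$, the $\prod_l d_l!$ orderings of each incident hyperedge compatible with a color pattern are counted against the scaling $(d-1)!^{-1}$ to produce the inverse multinomial factor, and the equivalence follows from observing that the resulting count depends on a vertex only through its color exactly when the coloring is perfect. Your bookkeeping of the combinatorial factor matches the paper's (where your $N_i$ is the paper's $t_{x,j_1,\ldots,j_{d-1}}$), so there is no gap.
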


\begin{proof}
$\Leftarrow$: Suppose that $f$  is a $k$-coloring of the hypergraph $\mathcal{G}$ with a color matrix $P$ such that 
$$\mathbb{A}  \circ P = P \circ \mathbb{S}. $$

Let us consider entries of the matrix $\mathbb{C} =\mathbb{A}  \circ P $. Using the definition of the multidimensional matrix product,  for a given vertex $x$ and colors $j_1, \ldots, j_{d-1}$ we have 
\begin{equation} \label{c-entries}
c_{x, j_1, \ldots, j_{d-1}} = \sum\limits_{x_1, \ldots, x_{d-1} = 1}^n  a_{x, x_1, \ldots, x_{d-1}} p_{x_1, j_1} \cdots p_{x_{d-1}, j_{d-1}}.
\end{equation}

Note that  an entry $a_{x, x_1, \ldots, x_{d-1}}$ of $\mathbb{A}$ is equal to $\frac{1}{(d-1)!}$ if and only if $x, x_1, \ldots, x_{d-1}$ are  different vertices of $\mathcal{G}$ and constitute a hyperedge, and the entries $p_{x_i,j_i} = 1$ if and only if $f(x_i) = j_i$. 
Let $t_{x, j_1, \ldots, j_{d-1}}$ be the number of hyperedges $(x, x_1, \ldots, x_{d-1})$ such that  $\{f(x_1), \ldots, f(x_{d-1})\}$ and $\{ j_1, \ldots, j_{d-1}\}$ coincide as multisets. We also assume that each color $l$ appears in these multisets exactly $d_l$ times, $\sum\limits_{l=1}^k d_l = d-1$.  Then equation~(\ref{c-entries}) is equivalent to $c_{x, j_1, \ldots, j_{d-1}}  = t_{x, j_1, \ldots, j_{d-1}} {d-1 \choose  d_1, \ldots, d_k}^{-1}$, because every ordering of vertices  $x_1, \ldots, x_{d-1}$ that does not change the sequence of colors $f(x_1), \ldots, f(x_{d-1})$ gives a summand  $\frac{1}{(d-1)!}$  in counting of $c_{x, j_1, \ldots, j_{d-1}} $.

On the other hand, using $\mathbb{C} = P \circ \mathbb{S}$ and again the definition of $\circ$-operation, we have
$$c_{x, j_1, \ldots, j_{d-1}} = \sum\limits_{i=1}^k p_{x,i} s_{i, j_1, \ldots, j_{d-1}}.$$
Since $p_{x,i} =1$ if and only if $f(x) = i$, we have that if a vertex $x$ has color $i$, then $c_{x, j_1, \ldots, j_{d-1}} = s_{i, j_1, \ldots, j_{d-1}}$ and, consequently,  $s_{i, j_1, \ldots, j_{d-1}} =t_{x, j_1, \ldots, j_{d-1}} {d-1 \choose  d_1, \ldots, d_k}^{-1} $. Therefore, the color ranges $\{i,  j_1, \ldots, j_{d-1} \} $ of  hyperedges incident to a vertex $x$ of color $i$ are uniquely defined by entries of $\mathbb{S}$, and so the coloring $P$ is perfect.  Replacing $v_{i,(i, j_1, \ldots, j_{d-1})} = t_{x, j_1, \ldots, j_{d-1}}$ for all vertices $x$ of color $i$, we have the statement of the theorem. 

$\Rightarrow$: Assume that a coloring $P$ is perfect.  Repeating the calculation of entries of the matrix $\mathbb{A} \circ P$  and using the fact that the values $t_{x, j_1, \ldots, j_{d-1}}$ are uniquely defined by the color of vertex $x$, we see that there exists the required $d$-dimensional matrix $\mathbb{S}$ satisfying $\mathbb{A} \circ P = P \circ \mathbb{S}$.

\end{proof}

 Note that  a perfect coloring $P$ defines only the factor  $ v_{\gamma_1,\gamma}$ in the  entries $s_{\gamma}$ of the matrix $\mathbb{S}$, while the factor ${d-1 \choose  d_1,  \ldots, d_k}^{-1}$ in $s_{\gamma}$ depends  only on the index $\gamma$ and it is the same for all $d$-dimensional parameter matrices $\mathbb{S}$.

\begin{sled} \label{parammultii2}
Let $\mathcal{G}$ be a $d$-uniform  hypergraph, $P$ be a perfect coloring of $\mathcal{G}$,  $n_l$ be the number of vertices of color $l$, and $m_\gamma$ be the number of hyperedges of a color range $\gamma $. Suppose that $W = (w_{\gamma,l})$ is the EX-parameter matrix  and $\mathbb{S}$ is the multidimensional parameter matrix of the perfect coloring $P$.  Then entries $s_\gamma$ of  $\mathbb{S}$ are 
$$s_\gamma  = d \cdot \frac{m_{\gamma}}{n_{\gamma_1}}  \cdot    {d \choose  w_{\gamma,1},  \ldots, w_{\gamma,k}}^{-1}.$$
\end{sled}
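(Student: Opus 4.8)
The plan is to start from the formula already established in Theorem~\ref{adjparam}, namely $s_\gamma = v_{\gamma_1,\gamma} {d-1 \choose d_1, \ldots, d_k}^{-1}$, and convert each of its two factors into the quantities appearing in the target expression. Two bookkeeping facts drive the computation. First, I would record the relation between the exponents $d_l$ and the entries $w_{\gamma,l}$ of the EX-parameter matrix. Since $\gamma = (\gamma_1, \gamma_2, \ldots, \gamma_d)$ encodes the full color range of the hyperedge, whereas $d_l$ counts appearances of color $l$ only among $\gamma_2, \ldots, \gamma_d$, we have $w_{\gamma,l} = d_l$ for every $l \neq \gamma_1$ and $w_{\gamma,\gamma_1} = d_{\gamma_1} + 1$; in particular $\sum_l d_l = d-1$ while $\sum_l w_{\gamma,l} = d$. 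Second, I would invoke the identity $NV = W^T M$ from Theorem~\ref{VWblock}, which in coordinates (with $i=\gamma_1$, $j=\gamma$) reads $n_{\gamma_1} v_{\gamma_1,\gamma} = w_{\gamma,\gamma_1} m_\gamma$; this is just the double counting of incident pairs (vertex of color $\gamma_1$, hyperedge of color range $\gamma$).

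The core of the argument is then a short multinomial manipulation. Using the shift $w_{\gamma,\gamma_1} = d_{\gamma_1}+1$ together with $(d_{\gamma_1}+1)! = w_{\gamma,\gamma_1} \cdot d_{\gamma_1}!$ and $d! = d \cdot (d-1)!$, I would rewrite
$$
{d \choose w_{\gamma,1}, \ldots, w_{\gamma,k}} = \frac{d!}{\prod_l w_{\gamma,l}!} = \frac{d}{w_{\gamma,\gamma_1}} {d-1 \choose d_1, \ldots, d_k},
$$
so that ${d-1 \choose d_1, \ldots, d_k}^{-1} = \frac{w_{\gamma,\gamma_1}}{d} {d \choose w_{\gamma,1}, \ldots, w_{\gamma,k}}^{-1}$. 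Substituting this expression, together with $v_{\gamma_1,\gamma} = \frac{m_\gamma w_{\gamma,\gamma_1}}{n_{\gamma_1}}$ taken from the coordinate form of $NV=W^TM$, into the formula of Theorem~\ref{adjparam}, the two occurrences of $w_{\gamma,\gamma_1}$ cancel and the factor $d$ survives, giving $s_\gamma = d \cdot \frac{m_\gamma}{n_{\gamma_1}} {d \choose w_{\gamma,1}, \ldots, w_{\gamma,k}}^{-1}$ as claimed.

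I expect no genuine obstacle, since the statement is a reformulation of Theorem~\ref{adjparam} rather than an independent result. The single point demanding care is the off-by-one in the index set: conflating the $(d-1)$-element multiset $\{\gamma_2, \ldots, \gamma_d\}$ that governs the multinomial in Theorem~\ref{adjparam} with the $d$-element color range $\gamma$ that governs the $w_{\gamma,\cdot}$ would destroy the cancellation. Tracking precisely this discrepancy is exactly what produces the surplus factor $w_{\gamma,\gamma_1}$ in the multinomial ratio, which then meets the matching factor coming from $NV=W^TM$, so that everything collapses to the clean constant $d$.
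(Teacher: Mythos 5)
Your argument is correct and essentially identical to the paper's proof: both start from the formula of Theorem~\ref{adjparam}, use the shift $d_{\gamma_1} = w_{\gamma,\gamma_1}-1$ and $d_l = w_{\gamma,l}$ for $l \neq \gamma_1$ to pass between the two multinomial coefficients, and cancel the resulting factor $w_{\gamma,\gamma_1}$ against the one coming from $n_{\gamma_1} v_{\gamma_1,\gamma} = w_{\gamma,\gamma_1} m_\gamma$ of Theorem~\ref{VWblock}(1). One transcription slip worth fixing: from your (correct) identity ${d \choose w_{\gamma,1}, \ldots, w_{\gamma,k}} = \frac{d}{w_{\gamma,\gamma_1}} {d-1 \choose d_1, \ldots, d_k}$ the inverse reads ${d-1 \choose d_1, \ldots, d_k}^{-1} = \frac{d}{w_{\gamma,\gamma_1}} {d \choose w_{\gamma,1}, \ldots, w_{\gamma,k}}^{-1}$, not $\frac{w_{\gamma,\gamma_1}}{d}$ times it --- the cancellation you then describe (the two $w_{\gamma,\gamma_1}$'s cancel and the $d$ survives) is exactly what the corrected version yields, so the conclusion stands.
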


\begin{proof}
By Theorem~\ref{adjparam}, the entries $s_{\gamma}$ of the parameter matrix $\mathbb{S}$ are equal to  $v_{\gamma_1,\gamma}  \cdot {d-1 \choose  d_1,  \ldots, d_k}^{-1}$, where $v_{\gamma_1,\gamma}$ are  entries of the $XE$-parameter matrix $V$  and each color $l$ appears in $\{\gamma_2, \ldots, \gamma_{d} \}$ exactly $d_{l}$ times.  By the definition of incidence parameters,  $\gamma = \{ \gamma_1, \ldots, \gamma_d \}$ contains the $l$-th color exactly $w_{\gamma,l}$ times, so $d_{\gamma_1} = w_{\gamma, \gamma_1} - 1$ and $d_{\gamma_i} = w_{\gamma, \gamma_i}$ for all other $\gamma_i$.

By Theorem~\ref{VWblock}(1), $n_{\gamma_1} v_{\gamma_1,\gamma}  = w_{\gamma, \gamma_1} m_{\gamma} $. Consequently,
$$s_{\gamma} = v_{\gamma_1,\gamma}   \cdot  \frac{d_1! \cdots d_k!}{(d-1)!}  =  \frac{v_{\gamma_1,\gamma}}{w_{\gamma, \gamma_1}} \cdot  \frac{w_{\gamma, 1}! \cdots w_{\gamma, k}!}{(d-1)!}  =  d \cdot  \frac{m_{\gamma}}{n_{\gamma_1}}  \cdot    {d \choose  w_{\gamma,1},  \ldots, w_{\gamma,k}}^{-1}.
$$

\end{proof}

\textbf{Example 2.}  Let $\mathcal{G}$ be the $3$-uniform hypergraph on $6$ vertices from Example~1 and $P$ be its perfect coloring:  
$$ P = 
\left(  \begin{array}{cc}
1 & 0 \\ 1 & 0 \\ 1 & 0 \\
0 & 1 \\ 0 & 1 \\ 0 & 1
\end{array} \right)$$
 Recall that the incidence parameters $(V,W)$ for this coloring are
 $$V = \left( 
\begin{array}{cc}
1 & 1 \\ 0 & 2 
\end{array}
\right); ~~~ 
W = 
\left( \begin{array}{cc}
3 & 0 \\ 1 & 2 
\end{array} \right).$$

The adjacency matrix $\mathbb{A}$ of the hypergraph $\mathcal{G}$ is the following $3$-dimensional matrix of order $6$:
\begin{gather*}
\mathbb{A} =  \frac{1}{2}\left(
\begin{array}{cccccc|cccccc|cccccc|}
0 & 0 & 0 & 0 & 0 & 0  &  0 & 0 & 1 & 0 & 0 & 0  &  0 & 1 & 0 & 0 & 0 & 0  \\
0 & 0 & 1 & 0 & 0 & 0  &  0 & 0 & 0 & 0 & 0 & 0  &  1 & 0 & 0 & 0 & 0 & 0  \\
0 & 1 & 0 & 0 & 0 & 0  &  1 & 0 & 0 & 0 & 0 & 0  &  0 & 0 & 0 & 0 & 0 & 0  \\
0 & 0 & 0 & 0 & 1 & 0  &  0 & 0 & 0 & 0 & 0 & 1  &  0 & 0 & 0 & 0 & 0 & 0  \\
0 & 0 & 0 & 1 & 0 & 0  &  0 & 0 & 0 & 0 & 0 & 0  &  0 & 0 & 0 & 0 & 0 & 1  \\
0 & 0 & 0 & 0 & 0 & 0  &  0 & 0 & 0 & 1 & 0 & 0  &  0 & 0 & 0 & 0 & 1 & 0  \\
\end{array}  \right. \\
\left. 
\begin{array}{|cccccc|cccccc|cccccc}
0 & 0 & 0 & 0 & 1 & 0  &  0 & 0 & 0 & 1 & 0 & 0  &  0 & 0 & 0 & 0 & 0 & 0 \\
0 & 0 & 0 & 0 & 0 & 1  &  0 & 0 & 0 & 0 & 0 & 0  &  0 & 0 & 0 & 1 & 0 & 0 \\
0 & 0 & 0 & 0 & 0 & 0  &  0 & 0 & 0 & 0 & 0 & 1  &  0 & 0 & 0 & 0 & 1 & 0  \\
0 & 0 & 0 & 0 & 0 & 0  &  1 & 0 & 0 & 0 & 0 & 0  &  0 & 1 & 0 & 0 & 0 & 0  \\
1 & 0 & 0 & 0 & 0 & 0  &  0 & 0 & 0 & 0 & 0 & 0  &  0 & 0 & 1 & 0 & 0 & 0  \\
0 & 1 & 0 & 0 & 0 & 0  &  0 & 0 & 1 & 0 & 0 & 0  &  0 & 0 & 0 & 0 & 0 & 0  \\
\end{array}
\right).
\end{gather*}
It can be checked directly that $\mathbb{A} \circ P = P \circ \mathbb{S}$ for the $3$-dimensional parameter matrix
$$\mathbb{S} = 
\left( 
\begin{array}{cc|cc}
1 & 0 & 0 & 1 \\
0 & 1  & 1 &  0 
\end{array}
\right), $$
where $s_{1,1,1} = v_{1,1}  / 1= 1 $, $s_{1, 2,2} = v_{1,2} / 1 = 1$, $s_{2, 2, 1} = s_{2, 1,2} = v_{2,2} / 2 = 1$. Note that $s_{1,1,2} = s_{1,2,1}  = s_{2,1,1}= s_{2,2,2} = 0$ because there are no hyperedges of color ranges $\{ 1,1,2\}$ and $\{ 2,2,2\}$ in the perfect coloring $P$.
\bigskip

Using the obtained results, let us prove several properties of the parameter matrix $\mathbb{S}$. 

\begin{utv}
Let $P$ be a perfect coloring in a $d$-uniform hypergraph $\mathcal{G}$ with the parameter matrix $\mathbb{S}$. Then the sum of entries of $\mathbb{S}$ along the $i$-th hyperplane of the first direction is equal to the degree of vertices of color $i$.
\end{utv}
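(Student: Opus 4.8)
The plan is to compute the hyperplane sum directly from the closed form for the entries of $\mathbb{S}$ given by Theorem~\ref{adjparam}, and to observe that the multinomial normalization factor is precisely what converts a sum over ordered index tuples into a sum over (unordered) color ranges. Fixing the first index at value $i$, the sum along the $i$-th hyperplane of the first direction is
$$
\sum_{\gamma_2, \ldots, \gamma_d = 1}^{k} s_{i, \gamma_2, \ldots, \gamma_d}.
$$
By Theorem~\ref{adjparam}, each summand equals $v_{i,\gamma} \binom{d-1}{d_1, \ldots, d_k}^{-1}$, where $\gamma = \{i, \gamma_2, \ldots, \gamma_d\}$ is the associated color range and $d_l$ counts the occurrences of color $l$ in the multiset $\{\gamma_2, \ldots, \gamma_d\}$. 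Since both the value $v_{i,\gamma}$ and the multinomial coefficient depend only on $\gamma$ and not on the ordering of $\gamma_2, \ldots, \gamma_d$, I would group the ordered tuples according to the color range $\gamma$ they produce.

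The key step is the count within each group. For a fixed color range $\gamma$ containing color $i$, the number of ordered tuples $(\gamma_2, \ldots, \gamma_d)$ that yield $\{i, \gamma_2, \ldots, \gamma_d\} = \gamma$ is exactly the multinomial coefficient $\binom{d-1}{d_1, \ldots, d_k}$, since this counts the arrangements of the multiset $\gamma \setminus \{i\}$ of size $d-1$ in which color $l$ appears $d_l$ times. Consequently each group contributes $\binom{d-1}{d_1, \ldots, d_k} \cdot v_{i,\gamma} \binom{d-1}{d_1, \ldots, d_k}^{-1} = v_{i,\gamma}$, and the two multinomial factors cancel. Tuples whose color range is not realized contribute $v_{i,\gamma} = 0$, so they may be included harmlessly.

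Summing over all color ranges $\gamma$ then gives $\sum_{\gamma} v_{i,\gamma}$, which is the sum of the entries in the $i$-th row of the XE-parameter matrix $V$. As already noted right after equation~(\ref{percoldef}), this row sum equals the degree of the vertices of color $i$, which completes the argument.

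The only point requiring care is the cancellation of the multinomial coefficients, together with the bookkeeping of multiplicities: color $i$ itself appears in $\gamma \setminus \{i\}$ with multiplicity $d_i = w_{\gamma,i} - 1$, exactly as recorded in the proof of Corollary~\ref{parammultii2}. I do not expect a genuine obstacle beyond this bookkeeping, since the normalization factor $\binom{d-1}{d_1, \ldots, d_k}^{-1}$ in $s_\gamma$ was introduced precisely so that passing from ordered tuples to color ranges recovers the degree.
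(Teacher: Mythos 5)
Your proposal is correct and follows essentially the same route as the paper's proof: both group the entries of the $i$-th hyperplane by color range, use the fact that each color range $\gamma$ accounts for exactly $\binom{d-1}{d_1,\ldots,d_k}$ entries so that the normalization factor cancels, and conclude that the hyperplane sum equals the $i$-th row sum of $V$, i.e.\ the degree. Your version merely spells out the bookkeeping (including $d_i = w_{\gamma,i}-1$) a bit more explicitly than the paper does.
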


\begin{proof}
By Theorem~\ref{adjparam}, the entries $s_{\gamma}$ of the parameter matrix $\mathbb{S}$ are equal to  $v_{\gamma_1,\gamma}   \cdot {d-1 \choose  d_1,  \ldots, d_k}^{-1}$, where  $v_{\gamma_1,\gamma} $ is the number of hyperedges of color range $\gamma$  incident to a vertex of color $\gamma_1$. 
Note that every hyperedge with color range $\gamma $ gives exactly ${d-1 \choose  d_1, \ldots, d_k}$ nonzero entries $s_{\gamma}$ in the $\gamma_1$-th hyperplane of the first direction  for the parameter matrix $\mathbb{S}$. So the total contribution of  such hyperedges   to this sum is $v_{\gamma_1,\gamma}$, and the total sum of entries along this hyperplane is the degree of a vertex of color $\gamma_1$. 
\end{proof}

\begin{teorema} \label{symmetrparam}
Let $\mathcal{G}$ be a $d$-uniform hypergraph. Assume that $P$ is a perfect coloring of $\mathcal{G}$ with the parameter matrix $\mathbb{S}$ and $N = P^T P$. Then the matrix $\mathbb{H} = N  \circ \mathbb{S}$ is symmetric.
\end{teorema}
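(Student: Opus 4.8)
The plan is to compute the entries of $\mathbb{H} = N \circ \mathbb{S}$ explicitly and then substitute the closed form for $\mathbb{S}$ obtained in Corollary~\ref{parammultii2}, after which symmetry becomes visible by inspection. First I would unwind the product. Since $N = P^TP$ is a $2$-dimensional matrix of order $k$ and $\mathbb{S}$ is $d$-dimensional of order $k$, the product $N \circ \mathbb{S}$ is $d$-dimensional, and by the definition of $\circ$ its entries are $h_{\gamma_1, \gamma_2, \ldots, \gamma_d} = \sum_{j=1}^{k} n_{\gamma_1, j}\, s_{j, \gamma_2, \ldots, \gamma_d}$. Because $N$ is diagonal with $n_{i,i} = n_i$, only the term $j = \gamma_1$ survives, which yields the key identity $h_{\gamma} = n_{\gamma_1}\, s_{\gamma}$ for every index $\gamma = (\gamma_1, \ldots, \gamma_d) \in [k]^d$; in other words, multiplication by $N$ simply rescales each hyperplane of the first direction of $\mathbb{S}$.

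Next I would invoke Corollary~\ref{parammultii2}, according to which $s_{\gamma} = d \cdot \frac{m_{\gamma}}{n_{\gamma_1}} \cdot {d \choose w_{\gamma,1}, \ldots, w_{\gamma,k}}^{-1}$. Substituting this into the identity above, the factor $n_{\gamma_1}$ introduced by the multiplication with $N$ cancels against the $n_{\gamma_1}$ in the denominator, leaving the manifestly symmetric expression $h_{\gamma} = d \cdot m_{\gamma} \cdot {d \choose w_{\gamma,1}, \ldots, w_{\gamma,k}}^{-1}$.

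Finally I would observe that the right-hand side depends only on the multiset $\{\gamma_1, \ldots, \gamma_d\}$ and not on the order of its entries: the number $m_{\gamma}$ of hyperedges of color range $\gamma$ and the multiplicities $w_{\gamma,l}$ of each color $l$ in such a hyperedge are by definition functions of the color range, which is a multiset, and the multinomial coefficient depends only on these multiplicities. Hence $h_{\gamma} = h_{\sigma(\gamma)}$ for every permutation $\sigma \in S_d$, which is precisely the definition of $\mathbb{H}$ being symmetric.

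I do not anticipate a genuine obstacle; the only step requiring care is the first one, namely correctly reading off from the definition of $\circ$ that pairing the diagonal matrix $N$ with $\mathbb{S}$ gives $h_{\gamma} = n_{\gamma_1}\, s_{\gamma}$ rather than some more entangled contraction. Once this rescaling and the ensuing cancellation of $n_{\gamma_1}$ are in place, the symmetry follows immediately from Corollary~\ref{parammultii2}. This argument is the multidimensional counterpart of the elementary $2$-dimensional fact, recalled earlier in the excerpt, that $NS = P^TPS$ is symmetric for perfect colorings of graphs.
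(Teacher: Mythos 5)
Your proposal is correct and follows essentially the same route as the paper's own proof: unwind the definition of $\circ$ to get $h_{\gamma} = n_{\gamma_1} s_{\gamma}$, substitute the formula $s_{\gamma} = d \cdot \frac{m_{\gamma}}{n_{\gamma_1}} \cdot {d \choose w_{\gamma,1}, \ldots, w_{\gamma,k}}^{-1}$ from Corollary~\ref{parammultii2} so that $n_{\gamma_1}$ cancels, and observe that the resulting expression depends only on the multiset of colors in $\gamma$. The only difference is cosmetic: you spell out the diagonal-collapse step and the multiset-invariance argument slightly more explicitly than the paper does.
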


\begin{proof}
Recall that $N= P^TP$ is a diagonal matrix whose $i$-th diagonal entry is the number $n_i$ of vertices of color $i$ in the coloring $P$. 

By Corollary~\ref{parammultii2}, an entry $s_\gamma $ of the matrix $\mathbb{S}$ is equal to $d \cdot  \frac{m_{\gamma}}{n_{\gamma_1}}   \cdot  {d \choose  w_{\gamma,1},  \ldots, w_{\gamma,k}}^{-1}$, where  $w_{\gamma,l}$ is the number of appearances of symbol $l$  in $\gamma$ and $m_{\gamma}$ is the number of hyperedges of $\mathcal{G}$ having the color range $\gamma$.

Using the definition of the product of multidimensional matrices, we see that  $\mathbb{H} = N \circ \mathbb{S}$ is a $d$-dimensional matrix with entries
$$h_{\beta} =   \sum\limits_{j=1}^k n_{\beta_1, j} \cdot  s_{j, \beta_2, \ldots, \beta_d} =  d \cdot  m_{\beta}  \cdot   {d \choose  w_{\beta,1},  \ldots, w_{\beta,k}}^{-1}. $$

Since for every permutation $\sigma \in S_d$ it holds $h_{\beta} = h_{\sigma(\beta)}$, we have that the matrix $\mathbb{H}$ is symmetric. 

 \end{proof}

It is well known that for graphs the spectrum of the parameter matrix  of a perfect coloring is contained in the spectrum of the adjacency matrix. A similar fact (regarding geometric multiplicities) is true for hypergraphs. Moreover, this property holds not only for perfect colorings of hypergraphs, but for general matrices satisfying the same equation.  To prove this statement, we need the following auxiliary  result.

\begin{utv} \label{idtransit}
Let $P$ be a color matrix of size $n \times k$ and $\mathbb{I}$ be the $d$-dimensional identity matrix. Then 
$$\mathbb{I} \circ P = P \circ \mathbb{I}.$$
\end{utv}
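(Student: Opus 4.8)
The plan is to unwind the definition of the $\circ$-product on each side and then compare the resulting entries, the only genuine input being the defining property of a color matrix. First I would observe that the two occurrences of $\mathbb{I}$ necessarily have different orders: in $\mathbb{I} \circ P$ the identity must be $d$-dimensional of order $n$, since the indices contracted against it are the row indices of $P$ (which range over $[n]$), whereas in $P \circ \mathbb{I}$ it must be $d$-dimensional of order $k$, since it is the column index of $P$ (ranging over $[k]$) that gets contracted against the first index of $\mathbb{I}$. Either way the result is a $d$-dimensional array whose first index $x$ runs over $[n]$ and whose remaining $d-1$ indices $j_1, \ldots, j_{d-1}$ run over $[k]$, so both sides have the same shape and the asserted equality is meaningful.

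For the left-hand side I would apply the definition of $\circ$ to obtain
$$(\mathbb{I} \circ P)_{x, j_1, \ldots, j_{d-1}} = \sum_{x_1, \ldots, x_{d-1} = 1}^{n} i_{x, x_1, \ldots, x_{d-1}} \, p_{x_1, j_1} \cdots p_{x_{d-1}, j_{d-1}},$$
and since $i_{x, x_1, \ldots, x_{d-1}}$ vanishes unless $x_1 = \cdots = x_{d-1} = x$, the sum collapses to the single product $p_{x, j_1} \cdots p_{x, j_{d-1}}$. For the right-hand side the definition gives
$$(P \circ \mathbb{I})_{x, j_1, \ldots, j_{d-1}} = \sum_{i_2 = 1}^{k} p_{x, i_2} \, i_{i_2, j_1, \ldots, j_{d-1}},$$
where the identity now forces $i_2 = j_1 = \cdots = j_{d-1}$, so the entry equals $p_{x, j_1}$ when $j_1 = \cdots = j_{d-1}$ and is $0$ otherwise.

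The concluding step, which carries the whole argument, is to invoke the fact that each row of $P$ contains exactly one $1$, say in column $c(x)$. Then the left-hand entry $p_{x, j_1} \cdots p_{x, j_{d-1}}$ equals $1$ exactly when every $j_l$ equals $c(x)$, while the right-hand entry equals $1$ exactly when $j_1 = \cdots = j_{d-1}$ and this common value is $c(x)$; in all remaining cases both entries are $0$. Thus both sides coincide with the indicator of the event $j_1 = \cdots = j_{d-1} = c(x)$, and the two arrays agree entrywise. I expect no serious obstacle here: the single point demanding care is the bookkeeping of the two distinct orders of $\mathbb{I}$ together with the index ranges, after which the color-matrix property makes the coincidence immediate.
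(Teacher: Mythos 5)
Your proof is correct and follows essentially the same route as the paper's: expand both sides via the definition of $\circ$, collapse the sums using the support of $\mathbb{I}$, and invoke the fact that each row of $P$ has exactly one $1$ to identify both entries with the indicator of $j_1 = \cdots = j_{d-1} = c(x)$. Your explicit bookkeeping of the two different orders of $\mathbb{I}$ (order $n$ on the left, order $k$ on the right) is a point the paper leaves implicit, but it does not change the substance of the argument.
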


\begin{proof}
The proof is based on the definitions of $\circ$-operation  and  matrices $\mathbb{I}$ and $P$.

Let us show that $(\alpha_1, \alpha_2, \ldots, \alpha_d)$-th entries of matrices in left-hand and right-hand sides of the equation $\mathbb{I} \circ P = P \circ \mathbb{I}$ are equal to $1$ if $\alpha_2 = \cdots = \alpha_d$ and $p_{\alpha_1, \alpha_2} = 1$ and they are $0$ otherwise. 

 Indeed,  an $(\alpha_1, \alpha_2, \ldots, \alpha_d)$-th entry of the matrix $\mathbb{I} \circ P$ is
$$\sum\limits_{\beta_2, \ldots, \beta_d =1}^n i_{\alpha_1, \beta_2, \ldots, \beta_d} \cdot p_{\beta_2, \alpha_2} \cdots p_{\beta_d, \alpha_d}.$$
Note that $i_{\alpha_1, \beta_2, \ldots, \beta_d} = 1$ if and only if $\alpha_1 = \beta_2 = \cdots = \beta_d$. Otherwise, $i_{\alpha_1, \beta_2, \ldots, \beta_d}  = 0$.  By the definition of a color matrix, there is a unique $j$ such that $p_{\alpha_1,j} = 1$. 

On the other hand, an $(\alpha_1, \alpha_2, \ldots, \alpha_d)$-th entry of the matrix $P \circ \mathbb{I}$ is
$$\sum\limits_{j =1}^k p_{\alpha_1, j} \cdot i_{j, \alpha_2, \ldots, \alpha_d} .$$
Again, there is a unique $j$ such that $p_{\alpha_1, j} = 1$ and $i_{j, \alpha_2, \ldots, \alpha_d} = 1$ if and only if $j = \alpha_2 = \cdots = \alpha_d$. 
\end{proof}

\begin{teorema} \label{specinclude}
Let $\mathbb{A}$ be a $d$-dimensional matrix of order $n$, $P$ be a color matrix of size $n \times k$, and $\mathbb{B}$ be a $d$-dimensional matrix of order $k$ such that $\mathbb{A} \circ P = P \circ \mathbb{B}$. If $(\lambda, x)$ is an eigenpair for the matrix $\mathbb{B}$, then $(\lambda, Px)$ is an eigenpair for  $\mathbb{A}$. 
\end{teorema}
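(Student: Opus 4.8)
The plan is to derive the eigenpair equation $\mathbb{A} \circ (Px) = \lambda\, \mathbb{I} \circ (Px)$ purely formally, by pushing the given intertwining relation $\mathbb{A} \circ P = P \circ \mathbb{B}$ through the associativity of the $\circ$-product. First I would observe that, since $P$ and $x$ both have dimension at most $2$, the matrix--vector expression $Px$ coincides with $P \circ x$ by Proposition~\ref{prodcomute}(1), so that $\mathbb{A} \circ (Px) = \mathbb{A} \circ (P \circ x)$. Associativity (Proposition~\ref{prodcomute}(2)) then lets me regroup this as $(\mathbb{A} \circ P) \circ x$, and substituting the hypothesis $\mathbb{A} \circ P = P \circ \mathbb{B}$ turns it into $(P \circ \mathbb{B}) \circ x = P \circ (\mathbb{B} \circ x)$.

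At this point I would invoke the eigenpair relation for $\mathbb{B}$, namely $\mathbb{B} \circ x = \lambda\, \mathbb{I} \circ x$ with $\mathbb{I}$ of order $k$, to obtain $P \circ (\lambda (\mathbb{I} \circ x))$. Since $\mathbb{I} \circ x$ is just the vector $(x_1^{d-1}, \ldots, x_k^{d-1})$ and $P$ is $2$-dimensional, the scalar $\lambda$ factors out with exponent $2-1 = 1$ by Proposition~\ref{prodcomute}(3), or simply by linearity of the matrix $P$, giving $\lambda\, \big(P \circ (\mathbb{I} \circ x)\big)$.

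It then remains to recognize the vector $P \circ (\mathbb{I} \circ x)$ as $\mathbb{I} \circ (Px)$, where now the identity matrix is of order $n$. This is exactly the step I expect to be the crux, because the two identity matrices live in different orders ($k$ on the left, $n$ on the right) and the nonlinear powering $\mathbb{I} \circ (\cdot)$ does not in general commute with an arbitrary linear map. What saves the argument is that $P$ is a \emph{color} matrix: Proposition~\ref{idtransit} gives precisely the intertwining $\mathbb{I} \circ P = P \circ \mathbb{I}$ between the order-$n$ and order-$k$ identities. Applying $\circ x$ on the right and using associativity once more, I would get $P \circ (\mathbb{I} \circ x) = (P \circ \mathbb{I}) \circ x = (\mathbb{I} \circ P) \circ x = \mathbb{I} \circ (P \circ x) = \mathbb{I} \circ (Px)$.

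Chaining these equalities yields $\mathbb{A} \circ (Px) = \lambda\, \mathbb{I} \circ (Px)$, which is the definition of $(\lambda, Px)$ being an eigenpair of $\mathbb{A}$. The only genuine subtlety, beyond bookkeeping the orders of the identity matrices, is to make sure associativity is legitimately applied to products in which $P$ is rectangular; since Proposition~\ref{prodcomute}(2) is stated for matrices of appropriate sizes, I would simply note this compatibility rather than reprove it.
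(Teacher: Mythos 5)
Your proof is correct and follows essentially the same route as the paper's: the identical chain of equalities via associativity, the substitution $\mathbb{A} \circ P = P \circ \mathbb{B}$, and the key identity $\mathbb{I} \circ P = P \circ \mathbb{I}$ from Proposition~\ref{idtransit}. Your explicit justification of why $\lambda$ factors out linearly (because $P$ is $2$-dimensional) is a small point the paper leaves implicit, but it is the same argument.
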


\begin{proof}
By the definition, $(\lambda, x)$ is an eigenpair for the matrix $\mathbb{B}$ if and only if 
$\mathbb{B}\circ x  = \lambda (\mathbb{I} \circ x ).$
Using Proposition~\ref{prodcomute} and the equality $ P \circ \mathbb{I} = \mathbb{I} \circ P$ from Proposition~\ref{idtransit}, we conclude that
\begin{gather*}
\mathbb{A} \circ (Px) = \mathbb{A} \circ (P \circ x) =  (\mathbb{A}\circ P) \circ x  = (P \circ \mathbb{B}) \circ x = P \circ (\mathbb{B}\circ x)  = \\ P \circ (\lambda \mathbb{I} \circ x)  = \lambda (P \circ \mathbb{I}) \circ x =  \lambda (\mathbb{I} \circ P) \circ x =   \lambda \mathbb{I} \circ (P \circ x) = \lambda \mathbb{I} \circ (Px).
\end{gather*}
\end{proof}

This theorem easily implies that all eigenvalues of the parameter matrix of a perfect coloring  are also eigenvalues of the adjacency matrix of a hypergraph. 

\begin{teorema} \label{adjcpecinclude}
Let $\mathcal{G}$ be a $d$-uniform  hypergraph with the adjacency matrix $\mathbb{A}$ and $P$ be a perfect coloring with the $d$-dimensional parameter matrix $\mathbb{S}$. If $(\lambda, x)$ is an eigenpair for the matrix $\mathbb{S}$, then $(\lambda, Px)$ is an eigenpair for the matrix $\mathbb{A}$. 
\end{teorema}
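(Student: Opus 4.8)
The plan is to observe that this statement is an immediate specialization of Theorem~\ref{specinclude}. By the definition of a perfect coloring in terms of the multidimensional adjacency matrix, saying that $P$ is a perfect coloring of $\mathcal{G}$ with parameter matrix $\mathbb{S}$ is precisely the same as asserting the identity $\mathbb{A} \circ P = P \circ \mathbb{S}$. This is exactly the hypothesis required by Theorem~\ref{specinclude}, once we set $\mathbb{B} = \mathbb{S}$.

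First I would check that all the dimensional bookkeeping lines up with the statement of Theorem~\ref{specinclude}: the matrix $\mathbb{A}$ is $d$-dimensional of order $n$, the parameter matrix $\mathbb{S}$ is $d$-dimensional of order $k$, and $P$ is a color matrix of sizes $n \times k$. These match verbatim the size constraints in Theorem~\ref{specinclude}. Since $P$ is a color matrix (each row contains exactly one nonzero entry), Proposition~\ref{idtransit} is available, and this is the only structural fact about $P$ that was invoked inside the proof of Theorem~\ref{specinclude}.

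With these identifications in place, I would simply apply Theorem~\ref{specinclude}: assuming $(\lambda, x)$ is an eigenpair for $\mathbb{S}$, the theorem yields directly that $(\lambda, Px)$ is an eigenpair for $\mathbb{A}$. There is no genuine obstacle to overcome here, since the substantive content of the argument — the associativity of $\circ$ from Proposition~\ref{prodcomute} together with the commutation $\mathbb{I} \circ P = P \circ \mathbb{I}$ from Proposition~\ref{idtransit} — was already carried out in the proof of Theorem~\ref{specinclude}. The only point that genuinely requires verification is that the perfect-coloring equation coincides with the equation $\mathbb{A} \circ P = P \circ \mathbb{B}$ appearing there, which holds by the very definition of the parameter matrix $\mathbb{S}$.
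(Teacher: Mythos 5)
Your proposal is correct and follows exactly the same route as the paper: the paper's proof also just notes that the perfect-coloring definition gives $\mathbb{A} \circ P = P \circ \mathbb{S}$ and then applies Theorem~\ref{specinclude}. Your additional remarks on the dimensional bookkeeping and on where Proposition~\ref{idtransit} enters are accurate but not needed beyond what the paper states.
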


\begin{proof}
It is sufficient to note that  $\mathbb{A} \circ P = P  \circ \mathbb{S}$ and apply Theorem~\ref{specinclude}.
\end{proof}

\begin{sled} \label{sumdegree}
Assume that $P$ is a perfect coloring of a $d$-uniform hypergraph $\mathcal{G}$ into $k$ colors with the parameter matrix $\mathbb{S}$ and $\lambda$ is an eigenvalue of $\mathbb{S}$. Then there is an eigenvector $y$ of $\mathcal{G}$ for eigenvalue $\lambda$ such that  components of $y$ attain at most $k$ different values.
\end{sled}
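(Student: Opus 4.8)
The plan is to obtain the eigenvector by transporting an eigenvector of the small parameter matrix $\mathbb{S}$ through the color matrix $P$, exactly as in Theorem~\ref{adjcpecinclude}, and then to read off the bound on distinct values from the combinatorial shape of $P$. First I would use the hypothesis that $\lambda$ is an eigenvalue of $\mathbb{S}$: by the definition of eigenvalue for a multidimensional matrix, this furnishes a nonzero eigenvector $x = (x_1, \ldots, x_k) \in \mathbb{C}^k$ with $\mathbb{S} \circ x = \lambda (\mathbb{I} \circ x)$. Since $P$ is a perfect coloring, we have $\mathbb{A} \circ P = P \circ \mathbb{S}$, so Theorem~\ref{adjcpecinclude} applies verbatim and tells us that $(\lambda, Px)$ is an eigenpair for the adjacency matrix $\mathbb{A}$. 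Setting $y = Px$, this is an eigenvector of $\mathcal{G}$ for the eigenvalue $\lambda$, which establishes the existence half of the statement.

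It then remains to bound the number of distinct components of $y$, and for this I would simply unwind the action of the color matrix. By definition $P$ has size $n \times k$ and every one of its rows carries a single $1$, located in the column indexed by the color of the corresponding vertex. Consequently, for a vertex $v$ of color $f(v)$, the $v$-th coordinate of $y = Px$ equals $x_{f(v)}$. Thus every component of $y$ lies in the finite set $\{x_1, \ldots, x_k\}$, whose cardinality is at most $k$, which is precisely the asserted bound. No calculation beyond this identification is needed.

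The only point requiring a word of care, and the closest thing to an obstacle, is checking that $y = Px$ is genuinely nonzero, so that it really qualifies as an eigenvector and not the degenerate solution. This I would settle by noting that the coloring $f$ is surjective, so each of the $k$ color classes is nonempty; hence $P$ has a nonzero entry in every column and therefore full column rank $k$, which forces $Px \neq 0$ whenever $x \neq 0$. With this remark the corollary follows directly from Theorem~\ref{adjcpecinclude} and the structure of $P$, so the proof is essentially a two-line deduction.
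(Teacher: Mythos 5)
Your proposal is correct and follows the same route as the paper: the paper's proof also takes an eigenvector $x$ of $\mathbb{S}$, sets $y = Px$, and invokes the argument of Theorem~\ref{specinclude} (equivalently Theorem~\ref{adjcpecinclude}), with the bound on distinct components coming from the fact that each row of the color matrix $P$ selects a single coordinate of $x$. Your additional check that $Px \neq 0$ (via surjectivity of the coloring) is a harmless refinement the paper leaves implicit.
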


\begin{proof}
Let $x$ be an eigenvector of $\mathbb{S}$ for the eigenvalue $\lambda$.  Set $y = Px$ and repeat the proof of Theorem~\ref{specinclude}.
\end{proof}

\section{Coverings of hypergraphs} \label{covering-setion}

We will say that a hypergraph $\mathcal{G}$ \textit{covers} a hypergraph $\mathcal{H}$ if there is a map (\textit{covering}) $\varphi: X(\mathcal{G}) \rightarrow X(\mathcal{H})$ such that for each hyperedge $e \in E(\mathcal{G})$  the set $\{ \varphi (x) | x \in e \}$ is a hyperedge of $\mathcal{H}$ and $\varphi$ saves a collection of incident hyperedges for each vertex.  

Note that a covering preserves degrees of vertices and sizes of hyperedges, so every covering of a $d$-uniform $r$-regular hypergraph is also $d$-uniform and $r$-regular. 

Every covering $\varphi$ of a hypergraph $\mathcal{H}$ by a hypergraph $\mathcal{G}$ may be considered as a coloring of $\mathcal{G}$ by vertices of $\mathcal{H}$. Moreover,   $\varphi$ colors the hyperedges of $\mathcal{G}$ by colors corresponding  to hyperedges of $\mathcal{H}$. It gives us the following statement.

\begin{utv} \label{colorcover}
A map $\varphi: X(\mathcal{G}) \rightarrow X(\mathcal{H})$  is a covering of a hypergraph $\mathcal{H}$ by a hypergraph $\mathcal{G}$ if and only if $\varphi$ is a perfect coloring of $\mathcal{G}$ with the parameter matrix $\mathbb{S}$ equal to the adjacency matrix $\mathbb{A}_H$ of $\mathcal{H}$. The incidence parameters of this coloring are $(C, C^T)$, where $C$  is the incidence matrix of $\mathcal{H}$. 
\end{utv}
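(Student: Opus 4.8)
The plan is to prove both implications through the explicit formula for the parameter matrix given in Theorem~\ref{adjparam}, after first interpreting $\varphi$ as a $k$-coloring of $\mathcal{G}$ in which each vertex $x$ receives the color $\varphi(x) \in X(\mathcal{H})$, so that $k = |X(\mathcal{H})|$ and the colors are indexed by the vertices of $\mathcal{H}$.

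First I would establish the forward direction. Assuming $\varphi$ is a covering, the key observation is that $\varphi$ is injective on every hyperedge $e \in E(\mathcal{G})$: since a covering preserves hyperedge sizes, the image $\{\varphi(x) : x \in e\}$ is a $d$-element hyperedge of $\mathcal{H}$, which forces the $d$ colors of $e$ to be distinct. Consequently the color range of each hyperedge of $\mathcal{G}$ is a genuine hyperedge of $\mathcal{H}$, and the distinct color ranges correspond bijectively to the hyperedges of $\mathcal{H}$, so that the number of color ranges equals $|E(\mathcal{H})|$. The condition that $\varphi$ saves the collection of incident hyperedges at each vertex then says precisely that, for a vertex $x$ with $\varphi(x) = y$, the incident hyperedges of $\mathcal{G}$ are in bijection with the hyperedges of $\mathcal{H}$ through $y$; reading this off gives the incidence parameters $v_{y,\gamma} = 1$ exactly when $\gamma$ is a hyperedge of $\mathcal{H}$ containing $y$ (and $0$ otherwise), and likewise $w_{\gamma,y} = 1$ exactly when $y \in \gamma$. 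These are by definition the entries of $C$ and $C^{T}$, which yields the claimed incidence parameters $(C, C^{T})$.

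Next I would convert these incidence parameters into the multidimensional parameter matrix using Theorem~\ref{adjparam}. For any index $\gamma$ that is a hyperedge of $\mathcal{H}$, the colors $\gamma_2, \ldots, \gamma_d$ are $d-1$ distinct colors, so each multiplicity $d_l$ is $0$ or $1$ and the multinomial coefficient equals $(d-1)!$; combined with $v_{\gamma_1,\gamma} = 1$ this gives $s_\gamma = (d-1)!^{-1}$, matching the corresponding entry of $\mathbb{A}_H$. For every other index $v_{\gamma_1,\gamma} = 0$, so $s_\gamma = 0$, again matching $\mathbb{A}_H$. Hence $\mathbb{S} = \mathbb{A}_H$, and $\varphi$ is a perfect coloring with the asserted parameter matrix.

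For the converse I would run the same computation backwards, and this is where I expect the main obstacle to lie. Starting from $\mathbb{S} = \mathbb{A}_H$, the fact that $\mathbb{A}_H$ vanishes on all indices with a repeated coordinate forces $v_{\gamma_1,\gamma} = 0$ whenever $\gamma$ has a repeated color, so no hyperedge of $\mathcal{G}$ carries a repeated color and $\varphi$ is injective on each hyperedge with image a $d$-set of vertices of $\mathcal{H}$. Where $\mathbb{A}_H$ is nonzero, inverting the formula of Theorem~\ref{adjparam} (the multinomial factor again being $(d-1)!$) gives $v_{\gamma_1,\gamma} = 1$, which says each such image $d$-set is a hyperedge of $\mathcal{H}$ and that the incident hyperedges of each vertex $x$ are in bijection with those of $\varphi(x)$ in $\mathcal{H}$. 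The delicate point is precisely that both defining properties of a covering — that hyperedges map to hyperedges, and that the collection of incident hyperedges at each vertex is preserved — must be extracted from the single matrix identity $\mathbb{S} = \mathbb{A}_H$, which requires using both the support of $\mathbb{A}_H$ (to rule out repeated colors and spurious color ranges) and its precise nonzero values (to force the incidence multiplicities to be exactly one). Once both facts are in hand, they are together the definition of a covering, completing the proof.
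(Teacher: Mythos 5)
Your argument is correct and follows exactly the route the paper intends: the paper states Proposition~\ref{colorcover} without a formal proof, offering only the one-sentence observation that a covering can be read as a coloring of $X(\mathcal{G})$ by the vertices of $\mathcal{H}$ (and of $E(\mathcal{G})$ by its hyperedges), and your write-up simply fills in the details of that identification via the incidence parameters and Theorem~\ref{adjparam}. Both directions check out, including the point you flag as delicate in the converse (extracting injectivity on hyperedges from the vanishing of $\mathbb{A}_H$ on repeated indices and the exact multiplicities from its nonzero entries).
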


An equivalent  definition of coverings of hypergraphs was proposed in~\cite{LiHou.hypercover}.   In particular, in~\cite[Theorem 7]{LiHou.hypercover} it was  stated that  hypergraph coverings are   equivalent to coverings of their incidence graphs.  It is also straightforward from our notions and Proposition~\ref{colorcover}. 

\begin{utv} \label{hyperbicolor}
A coloring $\varphi$ is a covering of a hypergraph $\mathcal{H}$  by a hypergraph  $\mathcal{G}$ if and only if the induced bipartite perfect coloring $\overline{\varphi}$ of the incidence graph $G$ is a covering of the incidence graph $H$. 
\end{utv}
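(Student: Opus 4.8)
The plan is to reduce the statement to the matrix criterion for graph coverings recalled in Section~\ref{prelim-section} (a graph $G$ covers $H$ if and only if there is a perfect coloring of $G$ whose parameter matrix equals the adjacency matrix of $H$) together with Proposition~\ref{colorcover}, so that the equivalence becomes a block-by-block comparison of parameter matrices. First I would make the colors of $\overline{\varphi}$ explicit. Write $C$ for the incidence matrix of $\mathcal{H}$, so that the adjacency matrix of its incidence graph $H$ is $\begin{pmatrix} 0 & C \\ C^{T} & 0 \end{pmatrix}$ in the vertex order $X(\mathcal{H}),E(\mathcal{H})$. The coloring $\overline{\varphi}$ colors the part $X(\mathcal{G})$ of $G$ by the values $\varphi(x)\in X(\mathcal{H})$ and the part $E(\mathcal{G})$ by the color ranges $\{\varphi(x)\mid x\in e\}$. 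Since $\{\varphi(x)\mid x\in e\}$ is literally the color range of $e$, the induced coloring of the $E$-part coincides with the coloring $e\mapsto\varphi(e)$, so the color set of $\overline{\varphi}$ is identified with $X(\mathcal{H})\cup E(\mathcal{H})=V(H)$ precisely when each $\varphi(e)$ is a hyperedge of $\mathcal{H}$; this is exactly where the covering hypothesis enters, and once it holds $\overline{\varphi}$ is a legitimate map $V(G)\to V(H)$.

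Next I would assemble the matrix equation. By equation~(\ref{percoldef}), the color matrix $\begin{pmatrix} 0 & P \\ R & 0 \end{pmatrix}$ of $\overline{\varphi}$ satisfies $BR=PV$ and $B^{T}P=RW$, which says precisely that $\overline{\varphi}$ is a perfect coloring of $G$ with parameter matrix $\begin{pmatrix} 0 & W \\ V & 0 \end{pmatrix}$, where $(V,W)$ are the incidence parameters of $\varphi$. Here the color order is $E(\mathcal{H})$ then $X(\mathcal{H})$ (the $E$-rows of the color matrix carry the block $R$, the $X$-rows carry $P$), so in the same order the adjacency matrix of $H$ reads $\begin{pmatrix} 0 & C^{T} \\ C & 0 \end{pmatrix}$. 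By the matrix criterion for graph coverings, $\overline{\varphi}$ is a covering of $H$ if and only if its parameter matrix equals this adjacency matrix, i.e. if and only if $W=C^{T}$ and $V=C$.

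Finally I would invoke Proposition~\ref{colorcover}, which states that $\varphi$ is a covering of $\mathcal{H}$ by $\mathcal{G}$ if and only if it is a perfect coloring of $\mathcal{G}$ with incidence parameters $(C,C^{T})$, that is, exactly when $V=C$ and $W=C^{T}$. Chaining the two equivalences yields the claim (recalling that, as noted earlier, $\overline{\varphi}$ is a perfect coloring of $G$ iff $\varphi$ is a perfect coloring of $\mathcal{G}$, so equation~(\ref{percoldef}) applies in both directions). The only real obstacle is the bookkeeping: one must keep the block order of the color matrix consistent with the chosen vertex order of $H$, so that the antidiagonal blocks $W$ above $V$ of the parameter matrix match $C^{T}$ above $C$, and one must ensure the color-range colors of the hyperedge part are genuinely identified with the hyperedges of $\mathcal{H}$, which is guaranteed exactly by the covering condition that every $\varphi(e)$ is a hyperedge of $\mathcal{H}$.
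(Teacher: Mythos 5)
Your argument is correct and is essentially the route the paper intends: the paper gives no explicit proof, remarking only that the statement is straightforward from Proposition~\ref{colorcover} and the earlier notions, and your block-matrix comparison of the incidence parameters $(V,W)$ with $(C,C^{T})$ via equation~(\ref{percoldef}) and the graph-covering criterion is precisely the natural way to fill in that remark. The only implicit step is that you apply the graph-covering criterion at the level of the specific map $\overline{\varphi}$ rather than as a pure existence statement, but that is the form in which the criterion is actually derived and it mirrors how Proposition~\ref{colorcover} is phrased for hypergraphs, so this is not a gap.
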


We will say that a covering $\varphi$ of a hypergraph $\mathcal{H}$ by $\mathcal{G}$  is a \textit{$k$-covering}  if for every $y \in X(\mathcal{H})$ there are exactly $k$ vertices $x \in X(\mathcal{G})$ such that $\varphi(x) = y$.   Proposition~\ref{hyperbicolor} implies that  every covering of a hypergraph is a $k$-covering.  

\begin{utv} \label{kcover}
For every  covering $\varphi$ of a  hypergraph $\mathcal{H} = (Y,U)$ by $\mathcal{G} = (X,E)$ there is $k\in \mathbb{N}$ such that $\varphi$ is $k$-covering. In particular,  we have $|X| = k|Y|$ and $|E| = k |U|$.
\end{utv}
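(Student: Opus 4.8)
The plan is to reduce the statement to the analogous fact for graph coverings by passing to incidence graphs, and then to read off the two counting identities from the bipartite structure. By Proposition~\ref{hyperbicolor}, the covering $\varphi$ of $\mathcal{H}$ by $\mathcal{G}$ is equivalent to a covering $\overline{\varphi}$ of the incidence graph $H$ by the incidence graph $G$. Here $H$ is the bipartite graph with parts $Y$ and $U$, and $G$ is the bipartite graph with parts $X$ and $E$; since $\overline{\varphi}$ is the induced bipartite coloring, it maps $X$ into $Y$ and $E$ into $U$.

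First I would invoke the fact recalled in Section~\ref{prelim-section}, that every graph covering is a $k$-covering for some $k \in \mathbb{N}$. Applied to $\overline{\varphi}$ and the graph $H$, whose vertex set is $Y \cup U$, this produces a single integer $k$ such that every vertex of $H$ has exactly $k$ preimages under $\overline{\varphi}$. Because $\overline{\varphi}$ respects the bipartition, the preimages of a vertex $y \in Y$ all lie in $X$; hence each $y \in Y$ has exactly $k$ preimages in $X$, which is exactly the assertion that $\varphi$ is a $k$-covering of the hypergraph. Symmetrically, each hyperedge $u \in U$, viewed as a vertex of $H$, has exactly $k$ preimages in $E$.

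The two cardinality relations then follow by summing fibre sizes over each part:
$$|X| = \sum_{y \in Y} \left| \overline{\varphi}^{-1}(y) \right| = k|Y|, \qquad |E| = \sum_{u \in U} \left| \overline{\varphi}^{-1}(u) \right| = k|U|.$$

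The one point deserving attention, and really the only content beyond the reduction to graphs, is that a single constant $k$ governs both parts of the bipartition at once. This is ultimately guaranteed by connectedness of the incidence graph (equivalently, of $\mathcal{H}$): the covering condition forces the fibre-size function to be constant along every edge of $H$, hence constant on each connected component, so the same $k$ serves the vertices $Y$ and the hyperedges $U$ simultaneously. I expect this uniformity to be the main, though mild, obstacle; once the graph covering theorem is quoted in the form ``all fibres have equal cardinality,'' both identities are immediate.
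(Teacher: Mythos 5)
Your proposal is correct and follows exactly the route the paper takes: the paper offers no separate proof of Proposition~\ref{kcover}, stating only that it is implied by Proposition~\ref{hyperbicolor} together with the fact from Section~\ref{prelim-section} that every graph covering is a $k$-covering, which is precisely the reduction you carry out. Your extra remark that connectedness is what forces a single $k$ to serve both parts of the bipartition is a sound (and worthwhile) clarification of a point the paper leaves implicit.
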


At last, we note that $k$-coverings of hypergraphs can be nicely described  in terms of incidence matrices.

\begin{teorema} \label{coverinci}
Let $\mathcal{G}$ and $\mathcal{H}$ be hypergraphs with incidence matrices $B$ and $C$,  respectively. If there is a $k$-covering of $\mathcal{H}$  by $\mathcal{G}$, then the matrix $B$ is a block matrix $\{ D_{i,j}\}$, where $D_{i,j}$ is a permutation matrix of order $k$ if $c_{i,j} = 1$ and $D_{i,j}$ is the zero matrix of order $k$ otherwise. 
\end{teorema}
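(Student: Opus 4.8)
The plan is to reduce everything to the block description of the incidence matrix already obtained in Theorem~\ref{VWblock}(1). By Proposition~\ref{colorcover}, a $k$-covering $\varphi$ of $\mathcal{H}$ by $\mathcal{G}$ is a perfect coloring of $\mathcal{G}$ whose incidence parameters are exactly $(C, C^T)$, where $C = (c_{i,j})$ is the incidence matrix of $\mathcal{H}$. Thus I can feed these parameters into Theorem~\ref{VWblock}(1): after grouping the vertices of $\mathcal{G}$ by their $\varphi$-image (a vertex of $\mathcal{H}$) and the hyperedges of $\mathcal{G}$ by their induced color (a hyperedge of $\mathcal{H}$), the matrix $B$ becomes a block matrix $\{ D_{i,j}\}$, where the block $D_{i,j}$ is a $(0,1)$-matrix of size $n_i \times m_j$ whose row sums all equal $v_{i,j} = c_{i,j}$ and whose column sums all equal $w_{j,i} = (C^T)_{j,i} = c_{i,j}$.

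Next I would pin down the sizes of the blocks. Since $\varphi$ is a $k$-covering, every vertex of $\mathcal{H}$ has exactly $k$ preimages, so $n_i = k$ for each color $i$. To see that $m_j = k$ as well, I would pass to the incidence graphs: by Proposition~\ref{hyperbicolor} the induced coloring $\overline{\varphi}$ is a covering of the incidence graph $H$ by the incidence graph $G$, and by Proposition~\ref{kcover} it is a $k$-covering with the same $k$. As every fiber of a graph $k$-covering has size $k$, in particular each vertex of $H$ lying in the part $U$ (that is, each hyperedge of $\mathcal{H}$) has exactly $k$ preimages among the hyperedges of $\mathcal{G}$; hence $m_j = k$ for every color $j$. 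Consequently each block $D_{i,j}$ is a square $(0,1)$-matrix of order $k$.

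It remains to identify these square blocks. Since $c_{i,j} \in \{0,1\}$, the block $D_{i,j}$ is a $(0,1)$-matrix of order $k$ all of whose row sums and column sums equal $c_{i,j}$. If $c_{i,j} = 0$, then all line sums vanish and $D_{i,j}$ is the zero matrix. If $c_{i,j} = 1$, then every row and every column of $D_{i,j}$ contains exactly one $1$, which is precisely the definition of a permutation matrix of order $k$. This yields the claimed description of $B$.

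The argument is short because the real content is carried by Theorem~\ref{VWblock}(1) and Proposition~\ref{colorcover}; the only point that needs genuine care is the bookkeeping of the two line-sum conditions, namely verifying that the row sums and column sums of $D_{i,j}$ both equal $c_{i,j}$ and that all fibers (over hyperedges as well as over vertices) share the common size $k$. Once both line sums are known to equal $1$ on the support, the characterization of permutation matrices as exactly the $(0,1)$-matrices all of whose line sums equal $1$ finishes the proof; I expect this normalization, rather than any deeper structural step, to be where one must be most attentive.
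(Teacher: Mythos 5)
Your proposal is correct and follows essentially the same route as the paper: interpret the $k$-covering as a perfect coloring with incidence parameters $(C, C^T)$ via Proposition~\ref{colorcover}, note that all color classes (of vertices and of hyperedges) have size $k$, and apply Theorem~\ref{VWblock}(1) to read off the blocks. The only difference is that you spell out the justification that the hyperedge fibers also have size $k$ (via Propositions~\ref{hyperbicolor} and~\ref{kcover}), a point the paper's proof leaves implicit in the phrase ``the size of each color class in $\varphi$ is $k$''.
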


\begin{proof}
By Proposition~\ref{colorcover}, we can consider a $k$-covering $\varphi$ as a perfect coloring of $\mathcal{G}$ with the incidence parameters $(C, C^T)$.  In particular, we have that the size of each color class in $\varphi$ is $k$. To prove the theorem, it only remains to apply Theorem~\ref{VWblock}(1) to the coloring $\varphi$.
\end{proof}

\textbf{Example 3.} Let $\mathcal{G}$ be the $3$-uniform hypergraph  with the vertex set $X = \{ x_1, x_2, \ldots , x_8\}$ and eight hyperedges $e_1 = \{ x_3, x_6, x_7\}$, $e_2 = \{ x_4, x_5, x_8\}$, $e_3 = \{ x_1, x_6, x_7\}$, $e_4 = \{ x_2, x_5, x_8\}$, $e_5 = \{ x_1, x_4, x_8\}$,  $e_6 = \{ x_2, x_3, x_7\}$, $e_7 = \{ x_2, x_3, x_5\}$, $e_8 = \{ x_1, x_4, x_6\}$, and $\mathcal{H}$ be the $3$-uniform hypergraph  with the vertex set $X' = \{ x'_1, x'_2, x'_3, x'_4\}$ and four hyperedges $e'_1 = \{ x'_2, x'_3, x'_4\}$,  $e'_2 = \{ x'_1, x'_3, x'_4\}$,  $e'_3 = \{ x'_1, x'_2, x'_4\}$,  $e'_4 = \{ x'_1, x'_2, x'_3\}$. 

Then the map $\varphi : X \rightarrow X'$ such that $\varphi(x_1) = \varphi(x_2) = x'_1$, $\varphi(x_3) = \varphi(x_4) = x'_2$, $\varphi(x_5) = \varphi(x_6) = x'_3$, $\varphi(x_7) = \varphi(x_8) = x'_4$ is a $2$-covering of the hypergraph $\mathcal{H}$ by the hypergraph $\mathcal{G}$. In particular,  $\varphi(e_1) = \varphi(e_2)  = e'_1$, $\varphi(e_3) = \varphi(e_4)  = e'_2$, $\varphi(e_5) = \varphi(e_6)  = e'_3$,  and $\varphi(e_7) = \varphi(e_8)  = e'_4$.  

The incidence matrices $B$ and $C$ of the hypergraphs $\mathcal{G}$ and $\mathcal{H}$, respectively, are
$$
B = \left( \begin{array}{cccccccc}
0 & 0 & 1 & 0 & 1 & 0 & 0 & 1 \\
0 & 0 & 0 & 1 & 0 & 1 & 1 & 0 \\
1 & 0 & 0 & 0 & 0 & 1 & 1 & 0 \\
0 & 1 & 0 & 0 & 1 & 0 & 0 & 1 \\
0 & 1 & 0 & 1 & 0 & 0 & 1 & 0 \\
1 & 0 & 1 & 0 & 0 & 0 & 0 & 1 \\
1 & 0 & 1 & 0 & 0 & 1 & 0 & 0 \\
0 & 1 & 0 & 1 & 1 & 0 & 0 & 0 \\
\end{array}
\right); ~~~
C = \left(  \begin{array}{cccc}
0 & 1 & 1 & 1 \\
1 & 0 & 1 & 1 \\
1 & 1 & 0 & 1 \\
1 & 1 & 1 & 0 \\
\end{array}
\right).
$$
It can checked that they satisfy the condition of Theorem~\ref{coverinci}.
\bigskip

The correspondence between coverings and colorings allows us to establish several properties of coverings of hypergraphs. First of all, let us show that if a uniform hypergraph $\mathcal{G}$ covers a hypergraph $\mathcal{H}$, then eigenvalues of $\mathcal{H}$ are also eigenvalues of $\mathcal{G}$.  This result was recently obtained in~\cite[Theorem 4.1]{SongFanWang.hyperspeccover}.

\begin{teorema} \label{coverspecinclude}
Assume that a $d$-uniform hypergraph $\mathcal{G}$ with the adjacency matrix $\mathbb{A}_G$ covers a hypergraph $\mathcal{H}$ with the adjacency matrix $\mathbb{A}_H$. Then every eigenvalue of $\mathbb{A}_H$ is an eigenvalue of $\mathbb{A}_G$ (regarding geometric multiplicities).
 \end{teorema}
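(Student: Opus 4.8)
The plan is to reduce the statement to a direct application of the machinery already built in the previous section, specifically Theorem~\ref{adjcpecinclude}. The key observation is that a covering is a special case of a perfect coloring whose parameter matrix is literally the adjacency matrix of the covered hypergraph. First I would invoke Proposition~\ref{colorcover}, which tells us that the covering $\varphi$ of $\mathcal{H}$ by $\mathcal{G}$ is a perfect coloring of $\mathcal{G}$ with parameter matrix $\mathbb{S} = \mathbb{A}_H$. Thus the defining equation of a perfect coloring reads $\mathbb{A}_G \circ P = P \circ \mathbb{A}_H$, where $P$ is the color matrix associated to $\varphi$ (with columns indexed by the vertices of $\mathcal{H}$).

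With this identification in hand, the eigenvalue containment is immediate. Suppose $(\lambda, x)$ is an eigenpair for $\mathbb{A}_H$. Since $\mathbb{A}_H$ plays the role of the parameter matrix $\mathbb{S}$, Theorem~\ref{adjcpecinclude} (or equally Theorem~\ref{specinclude} applied directly to the equation $\mathbb{A}_G \circ P = P \circ \mathbb{A}_H$) yields that $(\lambda, Px)$ is an eigenpair for $\mathbb{A}_G$. Hence every eigenvalue of $\mathbb{A}_H$ is an eigenvalue of $\mathbb{A}_G$, which is exactly the claim.

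The parenthetical assertion about geometric multiplicities requires a little more care, and this is the part I expect to be the main (though still modest) obstacle. The map $x \mapsto Px$ sends eigenvectors of $\mathbb{A}_H$ for $\lambda$ to eigenvectors of $\mathbb{A}_G$ for $\lambda$, so it maps the variety $V_{\mathbb{A}_H}(\lambda)$ into $V_{\mathbb{A}_G}(\lambda)$. To control the geometric multiplicity (the dimension of the eigenvariety) I would argue that this map is injective: because each column of $P$ is the indicator of a nonempty color class and distinct columns have disjoint supports, $P$ has full column rank, so $Px = 0$ forces $x = 0$ and the linear map $x \mapsto Px$ is injective. Injectivity of a linear (hence regular) map embeds $V_{\mathbb{A}_H}(\lambda)$ as a subvariety of $V_{\mathbb{A}_G}(\lambda)$ of the same dimension, giving $\dim V_{\mathbb{A}_G}(\lambda) \geq \dim V_{\mathbb{A}_H}(\lambda)$, which is the desired inequality on geometric multiplicities.

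I would therefore structure the write-up as: first cite Proposition~\ref{colorcover} to recast the covering as a perfect coloring with $\mathbb{S} = \mathbb{A}_H$; then apply Theorem~\ref{adjcpecinclude} to obtain the eigenvalue inclusion; and finally add one sentence noting that full column rank of the color matrix $P$ makes $x \mapsto Px$ an injection of eigenvarieties, securing the geometric multiplicity claim. The only genuine subtlety is that the eigenvarieties are algebraic sets rather than linear subspaces, so the multiplicity bound relies on injectivity of the linear embedding preserving dimension, not on any dimension formula for linear maps.
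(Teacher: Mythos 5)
Your proposal is correct and follows exactly the paper's route: recast the covering as a perfect coloring with parameter matrix $\mathbb{A}_H$ (Proposition~\ref{colorcover}) and then apply Theorem~\ref{adjcpecinclude}. Your extra remark that the color matrix $P$ has full column rank, so that $x \mapsto Px$ embeds the eigenvariety of $\mathbb{A}_H$ into that of $\mathbb{A}_G$ without dropping dimension, is a sound justification of the geometric-multiplicity claim that the paper leaves implicit.
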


\begin{proof}
By the definition of coverings, there is a perfect coloring $P$ of $\mathcal{G}$ with the parameter matrix $\mathbb{A}_H$. By Theorem~\ref{adjcpecinclude}, every eigenvector of the matrix $\mathbb{A}_H$ for an eigenvalue $\lambda$ gives an eigenvector of the matrix $\mathbb{A}_G$ for the same eigenvalue. 
\end{proof}

Next, let us prove that a covering preserves the set of perfect colorings of hypergraphs. 

\begin{teorema} \label{covercolorinclude}
Assume that a $d$-uniform hypergraph $\mathcal{G}$ with the adjacency matrix $\mathbb{A}_G$ covers a hypergraph $\mathcal{H}$ with the adjacency matrix $\mathbb{A}_H$. Then for every perfect coloring of $\mathcal{H}$ with the parameter matrix $\mathbb{S}$ there is a perfect coloring of $\mathcal{G}$ with the same parameter matrix $\mathbb{S}$.
\end{teorema}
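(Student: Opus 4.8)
The plan is to build the required coloring of $\mathcal{G}$ by \emph{composing} the covering map with the given perfect coloring of $\mathcal{H}$, and then to verify the parameter-matrix equation purely formally using associativity of the product $\circ$. First I would record the two matrix identities supplied by the hypotheses. By Proposition~\ref{colorcover}, the covering $\varphi$ is itself a perfect coloring of $\mathcal{G}$; writing $P$ for its color matrix (whose colors are the vertices of $\mathcal{H}$, so that $P$ has size $|X(\mathcal{G})| \times |X(\mathcal{H})|$), this reads
$$\mathbb{A}_G \circ P = P \circ \mathbb{A}_H.$$
On the other hand, letting $Q$ denote the color matrix of the given perfect coloring of $\mathcal{H}$ into $k$ colors (of size $|X(\mathcal{H})| \times k$), the defining equation of that coloring is
$$\mathbb{A}_H \circ Q = Q \circ \mathbb{S}.$$

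The candidate coloring of $\mathcal{G}$ is the one whose color matrix is the ordinary product $PQ$: a vertex $x$ of $\mathcal{G}$ receives the color that $Q$ assigns to $\varphi(x)$. I would first check that $PQ$ is a genuine color matrix. Since each row of $P$ and each row of $Q$ contains exactly one $1$, the entry $(PQ)_{x,i}$ equals $Q_{\varphi(x),i}$, so each row of $PQ$ has exactly one $1$; surjectivity of the resulting coloring follows from surjectivity of $\varphi$ and of $Q$.

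The heart of the argument is then a short formal computation. Using Proposition~\ref{prodcomute}(1) to identify $PQ$ with $P \circ Q$, and repeatedly applying associativity (Proposition~\ref{prodcomute}(2)) together with the two identities above, I would chain
$$\mathbb{A}_G \circ (PQ) = (\mathbb{A}_G \circ P)\circ Q = (P \circ \mathbb{A}_H)\circ Q = P \circ (\mathbb{A}_H \circ Q) = P \circ (Q \circ \mathbb{S}) = (PQ)\circ \mathbb{S}.$$
This is exactly the assertion that $PQ$ is a perfect coloring of $\mathcal{G}$ with parameter matrix $\mathbb{S}$, which is what we want.

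The only delicate point is the dimensional bookkeeping: the matrices $P$ and $Q$ are rectangular and $\mathbb{A}_G$, $\mathbb{A}_H$, $\mathbb{S}$ have three different orders, namely $|X(\mathcal{G})|$, $|X(\mathcal{H})|$, and $k$, so one must make sure that each product in the chain is defined and that the associativity asserted in Proposition~\ref{prodcomute}(2) genuinely applies to these mixed-order contractions. Since $\circ$ contracts along the index shared by the relevant pair and that index matches at every step of the chain, this is routine, and I expect it to be the main (though mild) obstacle. The whole scheme mirrors the transitivity of coverings and the composition of perfect colorings already used in the graph case.
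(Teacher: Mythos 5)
Your proposal is correct and follows essentially the same route as the paper: the paper also writes the covering as a color matrix $R$ with $\mathbb{A}_G \circ R = R \circ \mathbb{A}_H$ and derives $\mathbb{A}_G \circ (RP) = (RP) \circ \mathbb{S}$ by the identical associativity chain. Your extra remarks verifying that the product is a genuine color matrix and checking the order bookkeeping are sensible additions but not a different argument.
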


\begin{proof}
Let $P$ be a perfect coloring of $\mathcal{H}$ with the parameter matrix $\mathbb{S}$:
$$\mathbb{A}_H \circ P = P \circ \mathbb{S}.$$
Since $\mathcal{G}$ covers $\mathcal{H}$, there is a color matrix $R$ such that
$$\mathbb{A}_G \circ R = R \circ \mathbb{A}_H. $$
Then, using properties of the product of multidimensional matrices  from Proposition~\ref{prodcomute}, we have
$$\mathbb{A}_G \circ (RP)  = (\mathbb{A}_G \circ R) \circ P   = (R \circ \mathbb{A}_H) \circ P = R \circ (\mathbb{A}_H \circ P) = R \circ (P \circ \mathbb{S}) = RP \circ \mathbb{S}.$$
Consequently, $RP$ is a perfect coloring of $\mathcal{G}$ with the parameter matrix $\mathbb{S}$. 
\end{proof}

At last, we establish a result on the existence of common coverings for  hypergraphs with several corollaries.  They can be obtained by an accurate combination of the theorem on the existence of common coverings for graphs (see~\cite{leighton.comcover}) and  the correspondence between coverings of hypergraphs and their incidence graphs (Proposition~\ref{hyperbicolor}). Meanwhile, here we provide a direct proof of this result, reformulating the ideas from~\cite{leighton.comcover} in terms of incidence matrices. For future applications, we state this result for multihypergraphs.

\begin{teorema} \label{commoncover}
If connected  (multi)hypergraphs $\mathcal{H}$ and $\mathcal{H}'$ have perfect colorings with the same incidence parameters $(V,W)$, then there is a hypergraph $\mathcal{G}$ that covers both $\mathcal{H}$ and $\mathcal{H}'$.
\end{teorema}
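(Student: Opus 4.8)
The plan is to reduce the statement to the already-known theorem on common coverings of \emph{graphs} applied to the incidence graphs of $\mathcal{H}$ and $\mathcal{H}'$, and then to verify that the resulting common cover respects the vertex/hyperedge splitting, so that it is itself the incidence graph of a genuine (multi)hypergraph $\mathcal{G}$. First I would pass to incidence graphs $H$ and $H'$. A perfect coloring of $\mathcal{H}$ with incidence parameters $(V,W)$ induces, as in equation~(\ref{percoldef}), a bipartite perfect coloring $g$ of $H$, and by the very meaning of the incidence parameters its parameter matrix is the block anti-diagonal matrix
\[ S = \begin{pmatrix} 0 & V \\ W & 0 \end{pmatrix}, \]
whose color set splits into ``$X$-colors'' (the vertex classes) and ``$E$-colors'' (the color ranges of hyperedges). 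The identical computation for $\mathcal{H}'$ gives a bipartite perfect coloring $g'$ of $H'$ with the \emph{same} parameter matrix $S$, precisely because $\mathcal{H}$ and $\mathcal{H}'$ share $(V,W)$.

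Next I would invoke the graph common-covering theorem~\cite{AngGard.comcover, leighton.comcover}: since the connected graphs $H$ and $H'$ carry perfect colorings with the same parameter matrix $S$, there is a graph $G$ covering both $H$ and $H'$. The only freedom that $(V,W)$ leaves is a single global scaling of the class sizes: from $NV = W^T M$ (Theorem~\ref{VWblock}(1)) together with connectedness one sees that the ratios $n_i : m_\gamma$ are already determined by $(V,W)$, so the class sizes of $\mathcal{H}$ and of $\mathcal{H}'$ coincide up to one common scalar. This matching of fibre sizes is exactly the compatibility the graph theorem needs, and it is the one place where connectedness is genuinely used.

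Then I would promote $G$ back to a hypergraph. Because $S$ is block anti-diagonal, every color of $g$ lies entirely within one part of $H$, and coverings preserve colors; hence the vertex set of $G$ is partitioned into the lifts of the $X$-colors and the lifts of the $E$-colors, so $G$ is bipartite and the covering map sends parts to parts. Declaring the first part to be vertices and the second to be hyperedges (with incidence $=$ adjacency in $G$) produces a (multi)hypergraph $\mathcal{G}$ whose incidence graph is $G$. By Proposition~\ref{hyperbicolor}, the fact that $G$ covers $H$ respecting parts means precisely that $\mathcal{G}$ covers $\mathcal{H}$, and likewise $\mathcal{G}$ covers $\mathcal{H}'$; moreover the perfect coloring lifts to $\mathcal{G}$ with the same incidence parameters $(V,W)$, as one reads off from Theorem~\ref{coverinci}.

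The main obstacle is the engine itself. For a self-contained argument, rather than citing the graph theorem, one must re-prove Leighton's result in the incidence-matrix language of Theorem~\ref{coverinci}: build block matrices of permutation matrices over $B$ and over $B'$ that produce \emph{the same} large $(0,1)$-matrix. The local, block-by-block problem is easy --- two biregular bipartite graphs with the same bidegrees always admit a common cover (the regular case of Angluin--Gardiner) --- so the difficulty is forcing the permutation voltages on the different blocks to glue consistently into a single cover of the whole incidence graph. This global consistency is the genuine content of Leighton's construction, and I would adapt his counting/averaging argument, carried out over the common universal cover tree of $H$ and $H'$ (the trees coincide exactly because the colored local structure is the same $(V,W)$ data), to produce the required mutually compatible voltages.
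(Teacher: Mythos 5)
Your reduction is correct, and it is the route the paper explicitly declines to take: the author notes that the theorem ``can be obtained by an accurate combination'' of Leighton's graph theorem with Proposition~\ref{hyperbicolor}, but instead gives a direct, self-contained construction. Concretely, the paper builds the incidence matrix $B$ of the common cover $\mathcal{G}$ as a block matrix $\{D_{i,j}\}$ whose sub-blocks $F_{\alpha,\beta}$ (indexed by pairs $\alpha=(x,x')$, $\beta=(e,e')$ of vertices and hyperedges of $\mathcal{H}$ and $\mathcal{H}'$) have sizes $t_i\times s_j$ with $t_i=\Pi/n_i$, $s_j=\Pi/m_j$, $\Pi=\mathrm{lcm}(\pi_{i,j})$, and places the ones via the congruences $\sigma\equiv\nu(e)-\nu'(e')\bmod v_{i,j}$ and $\delta\equiv\mu(x)-\mu'(x')\bmod w_{j,i}$; connectedness enters exactly where you said it would, in showing $t_i=t'_i$ and $s_j=s'_j$ so the block sizes are consistent. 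What your approach buys is brevity and modularity (Leighton as a black box); what the paper's buys is an explicit incidence matrix for $\mathcal{G}$, independence from the graph literature, and the observation (the Remark after the proof) that the congruences can be replaced by arbitrary Latin squares of orders $v_{i,j}$ and $w_{j,i}$. One point you should make explicit in your version: you need the common cover $G\to H$, $G\to H'$ to be \emph{color-preserving} (as Leighton's construction in fact provides), not merely to exist. Otherwise, after passing to a connected component, the unique bipartition $\{A,B\}$ of $G$ could have $A$ mapping onto $X(H)$ under one covering but onto $E(H')$ under the other, and you would obtain a hypergraph covering $\mathcal{H}$ whose \emph{dual} covers $\mathcal{H}'$. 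With color-preservation the $X$-colors and $E$-colors pin down a single consistent vertex/hyperedge designation, and the rest of your argument (that the resulting bipartite covering is induced, hence a hypergraph covering by Proposition~\ref{hyperbicolor}) goes through. Your closing sketch of how to re-prove Leighton over the universal cover is only a sketch, but since the citation suffices for your route, that is not a gap.
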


\begin{proof}
Assume that $C$ and  $C'$ are the incidence matrices  of $\mathcal{H}$ and $\mathcal{H}'$  and  $\varphi$ and $\varphi'$ are their perfect colorings with incidence parameters $(V,W)$, respectively.  Let $n_i$ and $m_j$ ($n_i'$ and $m_j'$), $i = 1, \ldots, k$, $j= 1, \ldots, l$ be the number of vertices of color $i$ and the number of hyperedges of color $j$ in the induced bipartite perfect coloring of $\mathcal{H}$ (of $\mathcal{H}'$). By Theorem~\ref{VWblock}(1),  there are numbers $\pi_{i,j}$ and  $\pi'_{i,j}$ such that
$$\pi_{i,j} = n_i v_{i,j} = w_{j,i} m_j; ~~~ \pi'_{i,j} =  n'_i v_{i,j} = w_{j,i} m'_j.$$

Moreover, by Theorem~\ref{VWblock}(1), the incidence matrix $C$ is  the block matrix $\{ A_{i,j}\}$ such that $A_{i,j}$ is a $(0,1)$-matrix of size $n_i \times m_j$ with row sums $v_{i,j}$ and column sums $w_{j,i}$. In particular, the block $A_{i,j}$ contains $\pi_{i,j} $ ones.  Let us index all nonzero entries $a_{x,e}$ of $A_{i,j}$ by pairs $(\mu, \nu)$, $\mu \in \{ 0, \ldots, w_{j,i} -1 \}$, $\nu \in \{ 0,\ldots, v_{i,j} -1\}$, so that $\mu(x)$ is the position of $a_{x,e}$ in the $e$-th column of $A_{i,j}$ with respect to other nonzero entries (from up to down), and $\nu(e)$ is the position of this entry in the $x$-th row of $A_{i,j}$ (from left to right).

Similarly, the incidence matrix $C'$ is a block matrix $\{ A'_{i,j}\}$ with blocks $A'_{i,j}$ of sizes $n'_i \times m'_j$, row sums $v_{i,j}$, and column sums $w_{j,i}$. We also label all nonzero entries of $\{ A'_{i,j}\}$ by pairs $(\mu', \nu')$ in the same way.

Let $\Pi$ denote the least common multiple of the numbers $\pi_{i,j}$:
$\Pi = \mathrm{lcm}_{i , j} (\pi_{i,j} ),$ and let $t_i = \Pi / n_i$ and $s_j = \Pi / m_j$. Note that all $t_i$ and $s_j$ are integer.

We construct the incidence matrix $B$ of a covering (multi)hypergraph $\mathcal{G}$ as a block matrix $\{ D_{i,j}\}$, $i =1, \ldots, k$, $j= 1, \ldots, l$.  The size of each block $D_{i,j}$ will be $t_{i} n_i n'_i \times s_j m_j m'_j $.   

 Note that the sizes of blocks $ D_{i,j} $  remain the same, if  we use  $t'_i$ and $s'_j$ instead $t_i$ and $s_j$. It is shown in the following claim.

\textbf{Claim:}  For all $i = 1, \ldots, k$, $j = 1, \ldots, l$, we have  that $t_{i}   = t'_{i} $ and  $s_j   = s'_j $, where $t_i = \Pi / n_i$, $t'_i = \Pi' / n'_i$, $s_j = \Pi / m_j$, $s'_j = \Pi / m'_j$, $\Pi = \mathrm{lcm} (\pi_{i,j})$, $\Pi' = \mathrm{lcm} (\pi'_{i,j})$. 

\begin{proof}[Proof of claim]
Thanks to symmetry, it is sufficient to prove  the equalities for all $t_i$ and $t'_i$. 
Without loss of generality, we show  that $t_{1}  = t'_{1} $, i.e., $\frac{\mathrm{lcm} (\pi_{i,j})}{n_1} = \frac{\mathrm{lcm} (\pi'_{i,j})}{n'_1}$.   Since $\mathcal{H}$ is a connected (multi)hypergraph, there are Berge paths from a vertex of color $1$ to any vertex of color $c \neq 1$. Using relations $n_i v_{i,j} = w_{j,i} m_j$ from Theorem~\ref{VWblock}(1) for  colors of vertices $i$ and colors of hyperedges $j$ along this path, we conclude that there are rational  numbers $r_{1,c}$ depending only $V$ and $W$ such that $n_{c} = r_{1,c} n_1 $ for every $c \geq 1$. Using these equalities, we conclude that there is a rational number $q$ depending only on $V$ and $W$ such that $\Pi = \mathrm{lcm} (\pi_{i,j}) = q n_1$.  

Acting similarly, in case of the  $\mathcal{H}'$ we get the same $q$  such that $\Pi' = \mathrm{lcm} (\pi'_{i,j}) = q n'_1$.  Consequently, $t_1 = t'_1$. 
\end{proof}

Let us describe the structure of blocks $D_{i,j}$. Given $i$ and $j$, each $D_{i,j}$  is a block matrix  $\{ F_{\alpha,\beta} \}$ with blocks of sizes $t_i \times s_j$,  each index $\alpha$ has a form $(x,x')$, $x \in X(\mathcal{H})$, $x' \in X(\mathcal{H}')$,  and an index $\beta$ has a form $(e,e')$, $e \in E(\mathcal{H})$, $e' \in E(\mathcal{H}')$. Each block $F= F_{\alpha, \beta}$, where $\alpha = (x,x')$, $\beta = (e,e')$,  is a $(0,1)$-matrix defined as follows.
\begin{itemize}
\item  Assume that  for entries of $A_{i,j}$  and $A'_{i,j}$ it holds $a_{x,e} = 1$ and $a'_{x',e'} = 1$.  If  $\sigma  \equiv \nu(e) - \nu'(e')  \mod v_{i,j}$  and  $\delta   \equiv \mu(x) - \mu'(x') \mod w_{j,i}$, then the $\sigma$-th row and the $\delta$-th column of the block $F_{\alpha, \beta}$ contain exactly one $1$, otherwise they are filled by $0$.
\item If we have $a_{x,e} = 0$ or  $a'_{x',e'} = 0$, then the block $F_{\alpha, \beta}$ is the zero matrix. 
\end{itemize}

Note that  the nonzero  blocks $F_{\alpha, \beta}$ contain exactly $t_i / v_{i,j} = \frac{\Pi}{n_i v_{i,j}}$ nonzero rows  that coincides with the number $s_j / w_{j,i} = \frac{\Pi}{m_j w_{j,i}}$ of nonzero columns. Since both of these numbers are integers,  the blocks $F_{\alpha, \beta}$ are well defined.

It only remains to show that the constructed $(0,1)$-matrix $B = \{ D_{i,j}\}$  (with $D_{i,j} = \{ F_{\alpha, \beta} \}$) is the incidence matrix of a (multi)hypergraph $\mathcal{G}$ covering both $\mathcal{H}$ and $\mathcal{H}'$. Without loss of generality, let us prove that $\mathcal{G}$ covers  $\mathcal{H}$.

For every $(x,e)$-entry of the incidence matrix $C$ of $\mathcal{H}$, consider a submatrix (block) $J_{x,e}$ of $B$ composed of  blocks $F_{\alpha, \beta}$ such that  $\alpha = (x,x')$, $\beta = (e,e')$, $(x',e')$ runs over all entries of $C'$.  It can be checked that  $J_{x,e}$ is a  square matrix of order $\Pi'$. 

Let  $x$ has color $i$ and $e$ has color $j$ in the perfect coloring $\varphi$ of $\mathcal{H}$. Then $J_{x,e}$ is a submatrix contained in the block $D_{i,j}$ of $B$.

The definition of blocks $F_{\alpha,\beta}$ implies that if  $(x,e)$-entry of $C$ is $0$, then the corresponding block $J_{x,e}$ is the zero matrix.

Suppose that $(x,e)$-entry of $C$ is  equal to $1$.  Then every row of the submatrix $J_{x,e}$ contains exactly one $1$, because, (by the construction of  $F_{\alpha, \beta}$) there is a unique $(x',e')$-entry of $C'$ that gives the block $F_{\alpha,\beta}$ having the unity entry exactly at this row. For similar reasons, every column of $J_{x,e}$ contains exactly one $1$. Consequently, $J_{x,e}$ is a permutation matrix.

By Theorem~\ref{coverinci}, it means that the constructed (multi)hypergraph $\mathcal{G}$ covers $\mathcal{H}$. If  $\mathcal{G}$  has  multiple hyperedges   (i.e. $B$ contains identical columns), then one can cover $\mathcal{G}$ by some hypergraph $\mathcal{G}'$ that also covers $\mathcal{H}$ and $\mathcal{H}'$. 
\end{proof}

\begin{zam}
 In definition of matrices $F_{\alpha,\beta}$ instead relations $ \sigma \equiv \nu(e) - \nu'(e')  \mod v_{i,j}$  and  $\delta \equiv \mu(x) - \mu'(x') \mod w_{j,i}$ we can use any other latin squares of orders $v_{i,j}$ and $w_{j,i}$, whose   entries we treat  as  triples $(\nu(e), \nu'(e'), \sigma)$ and $(\mu(x), \mu'(x'), \delta)$, respectively. 
 \end{zam}

Let us obtain several corollaries of  Theorem~\ref{commoncover}. Firstly,  we reformulate  it in terms of multidimensional matrices.

\begin{sled} \label{matrixcommoncover}
Let $\mathbb{A}$ and $\mathbb{A}'$ be $d$-dimensional adjacency matrices of $d$-uniform hypergraphs $\mathcal{H}$ and $\mathcal{H}'$. If there are color matrices $P$ and $P'$ and a $d$-dimensional matrix $\mathbb{S}$ such that $\mathbb{A} \circ P = P \circ \mathbb{S}$ and $\mathbb{A}' \circ P' = P' \circ \mathbb{S}$, then there  are color matrices $R$ and $R'$ and a $d$-dimensional matrix $\mathbb{L}$ such that $\mathbb{L} \circ R = R \circ \mathbb{A}$ and $\mathbb{L} \circ R' = R' \circ \mathbb{A}'$. 
\end{sled}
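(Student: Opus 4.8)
The plan is to prove the corollary as a pure translation exercise: read the two multidimensional hypotheses as ordinary perfect colorings via Theorem~\ref{adjparam}, check that sharing the parameter matrix $\mathbb{S}$ forces the two colorings to share their incidence parameters $(V,W)$, invoke Theorem~\ref{commoncover} to produce a common covering, and finally use Proposition~\ref{colorcover} to turn that covering back into the two multidimensional equations in the conclusion. First I would unpack the hypotheses. By Theorem~\ref{adjparam}, the equation $\mathbb{A} \circ P = P \circ \mathbb{S}$ says exactly that $P$ is a perfect coloring of $\mathcal{H}$ into $k$ colors with parameter matrix $\mathbb{S}$, and $\mathbb{A}' \circ P' = P' \circ \mathbb{S}$ says the same for $P'$ on $\mathcal{H}'$; the essential point is that the single matrix $\mathbb{S}$ is shared.

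The key step is to deduce that $P$ and $P'$ carry identical incidence parameters. By Theorem~\ref{adjparam} the entry $s_\gamma$ equals $v_{\gamma_1,\gamma}\binom{d-1}{d_1,\dots,d_k}^{-1}$, and the multinomial factor depends only on the index $\gamma$, not on the hypergraph. Hence $\mathbb{S}$ determines every number $v_{\gamma_1,\gamma}$, so $P$ and $P'$ have the same XE-parameter matrix $V$ and, in particular, the same support of realized color ranges (an entry $s_\gamma$ is nonzero precisely when the color range $\gamma$ occurs). The EX-parameter matrix is then forced as well, since $w_{\gamma,l}$ is merely the multiplicity of color $l$ in the multiset $\gamma$, a quantity read off from $\gamma$ itself. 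Thus both colorings realize the identical incidence parameters $(V,W)$, and the same number $l$ of distinct color ranges.

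I would then apply Theorem~\ref{commoncover}: since $\mathcal{H}$ and $\mathcal{H}'$ carry perfect colorings with the same $(V,W)$, there is a (multi)hypergraph $\mathcal{G}$ covering both. Replacing $\mathcal{G}$ by an honest hypergraph covering it if necessary (and using that covering composes), I may assume $\mathcal{G}$ is a genuine hypergraph; since a covering preserves hyperedge sizes, $\mathcal{G}$ is $d$-uniform and has a well-defined $d$-dimensional adjacency matrix $\mathbb{L} = \mathbb{A}_{\mathcal{G}}$. Finally, Proposition~\ref{colorcover} converts each covering back into a perfect coloring: the covering $\mathcal{G} \to \mathcal{H}$ is a perfect coloring $R$ of $\mathcal{G}$ with parameter matrix $\mathbb{A}$, i.e.\ $\mathbb{L} \circ R = R \circ \mathbb{A}$, and the covering $\mathcal{G} \to \mathcal{H}'$ gives $R'$ with $\mathbb{L} \circ R' = R' \circ \mathbb{A}'$, which is exactly the desired conclusion.

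The main obstacle I expect is a hypothesis gap rather than a calculation: Theorem~\ref{commoncover} assumes $\mathcal{H}$ and $\mathcal{H}'$ are \emph{connected} (connectivity is used in its internal claim that $t_i = t_i'$), while the corollary states no such assumption. I would resolve this either by adding connectivity to the statement, or by handling the disconnected case component-by-component, first checking that the shared $\mathbb{S}$ allows the components of $\mathcal{H}$ and $\mathcal{H}'$ to be matched with coinciding local parameters. A secondary point to verify with care is the passage from the possibly multiple-edged $\mathcal{G}$ to an honest hypergraph, so that $\mathbb{L}$ is literally an adjacency matrix with entries $(d-1)!^{-1}$ as the $\circ$-equation requires.
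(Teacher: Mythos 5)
Your proof follows essentially the same route as the paper's, which simply cites Theorem~\ref{adjparam}, Theorem~\ref{commoncover}, and Proposition~\ref{colorcover} in exactly the order you use them; your write-up just supplies the intermediate verification (that a shared $\mathbb{S}$ forces shared incidence parameters $(V,W)$) that the paper leaves implicit. Your observation about the missing connectivity hypothesis is a fair catch --- the paper's corollary indeed omits an assumption that Theorem~\ref{commoncover} requires --- but this is a gap in the paper's statement rather than in your argument.
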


\begin{proof}
The statement directly follows from the existence of a common covering (Theorem~\ref{commoncover}), an interpretation of a covering as a perfect coloring (Proposition~\ref{colorcover}), and  the definition of perfect colorings by the means of multidimensional adjacency matrices (Theorem~\ref{adjparam}).
\end{proof}

\begin{sled} \label{Leightonhyper}
Let $\mathcal{H}$ and $\mathcal{H}'$ be $d$-uniform connected hypergraphs. There are perfect colorings of $\mathcal{H}$ and $\mathcal{H}'$ with the same parameter matrix $\mathbb{S}$ if and only if there exists a hypergraph $\mathcal{G}$ that covers both $\mathcal{H}$ and $\mathcal{H}'$. 
\end{sled}

\begin{proof}

Necessity is proved in Corollary~\ref{matrixcommoncover} and in Theorem~\ref{commoncover}. 

Let us prove sufficiency. Suppose that a hypergraph $\mathcal{G}$ covers hypergraphs  $\mathcal{H}$ and $\mathcal{H}'$ with adjacency matrices $\mathbb{A}$ and $\mathbb{A}'$, respectively. Assume that there are no perfect colorings of $\mathcal{H}$ and $\mathcal{H}'$ with the same parameter matrix $\mathbb{S}$. In particular, the minimal perfect colorings $P$ and $P'$ of these hypergraphs (that exist by Theorem~\ref{WLhyper}) have different parameter matrices  $\mathbb{S}$ and  $\mathbb{S}'$:
$$\mathbb{A} \circ P = P \circ \mathbb{S}; ~~ \mathbb{A}' \circ P' = P' \circ \mathbb{S}'.$$

By Theorem~\ref{covercolorinclude}, the hypergraph $\mathcal{G}$ has perfect colorings with the parameter matrices   $\mathbb{S}$ and  $\mathbb{S}'$. Using again Theorem~\ref{WLhyper}, find the minimal perfect coloring of the hypergraph $\mathcal{G}$. It has the parameter matrix $\mathbb{T}$ such that $\mathbb{T}$ is different from at least one of the matrices   $\mathbb{S}$ and  $\mathbb{S}'$. Without loss of generality, assume that $\mathbb{T}$ is not equal to  $\mathbb{S}$. Then the perfect coloring of $\mathcal{G}$ with the parameter matrix $\mathbb{S}$ is a refinement of the minimal perfect coloring of $\mathcal{G}$. Equivalently, there exists a color matrix $R$ such that $\mathbb{S} \circ R = R \circ \mathbb{T}$. Using Proposition~\ref{prodcomute}, we get
\begin{gather*}
\mathbb{A} \circ (P  R) = \mathbb{A} \circ (P \circ R) =  (\mathbb{A} \circ P) \circ R  = (P \circ  \mathbb{S})  \circ R =   \\
P \circ  (\mathbb{S}  \circ R) = P \circ (R \circ \mathbb{T}) = (P R ) \circ \mathbb{T}.
\end{gather*}
Therefore, $PR$  is a perfect coloring of $\mathcal{H}$ in which some color classes are a union of color classes of $P$: a contradiction with the minimality of the perfect coloring $P$.
\end{proof}

We also prove that every regular uniform hypergraph can be covered by a multipartite hypergraph that can be partitioned into perfect matchings.

\begin{sled}
For every $d$-uniform $r$-regular hypergraph $\mathcal{H}$ there is a  hypergraph $\mathcal{G}$  such that $\mathcal{G}$ covers $\mathcal{H}$ and $\mathcal{G}$  is $d$-partite hypergraph that can be partitioned into $r$ perfect matchings.
\end{sled}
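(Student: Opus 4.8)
The plan is to apply the common-covering theorem (Theorem~\ref{commoncover}) to $\mathcal{H}$ together with a suitable ``model'' hypergraph $\mathcal{H}'$ whose desired structure is then transported to the common covering $\mathcal{G}$. We may assume that $\mathcal{H}$ is connected, since otherwise we run the argument on each connected component and take the disjoint union of the resulting coverings.

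First I would equip $\mathcal{H}$ with its trivial perfect coloring, in which all vertices receive a single color. Since $\mathcal{H}$ is $d$-uniform and $r$-regular, every hyperedge then has the unique color range $\gamma_0 = \{1^d\}$, and the incidence parameters are the $1 \times 1$ matrices $V = (r)$ and $W = (d)$ (the row sum of $V$ equals the degree $r$, and the row sum of $W$ equals the hyperedge size $d$). Next I would take $\mathcal{H}'$ to be the connected $d$-partite multihypergraph on the $d$ vertices $y_1, \ldots, y_d$, one in each part, with $r$ parallel hyperedges each equal to $\{y_1, \ldots, y_d\}$. Its trivial coloring again has incidence parameters $(V,W) = ((r),(d))$, and $\mathcal{H}'$ visibly decomposes into $r$ perfect matchings, one per parallel copy. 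By Theorem~\ref{commoncover} there is then a (multi)hypergraph $\mathcal{G}$ covering both $\mathcal{H}$ and $\mathcal{H}'$; covering $\mathcal{H}$ is exactly the required conclusion, so it only remains to transfer the $d$-partiteness and the perfect-matching decomposition from $\mathcal{H}'$ to $\mathcal{G}$.

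For this, let $\psi \colon X(\mathcal{G}) \to X(\mathcal{H}')$ be the covering. Because coverings preserve uniformity, $\mathcal{G}$ is $d$-uniform, and for each hyperedge $e$ of $\mathcal{G}$ the set $\{\psi(x) \mid x \in e\}$ is a hyperedge of $\mathcal{H}'$ of size $d$; hence $\psi$ restricts to a bijection from $e$ onto $\{y_1, \ldots, y_d\}$. Declaring the parts of $\mathcal{G}$ to be $\psi^{-1}(y_1), \ldots, \psi^{-1}(y_d)$ therefore yields a $d$-partition in which every hyperedge meets each part exactly once. For the matchings, let $f_1, \ldots, f_r$ be the parallel hyperedges of $\mathcal{H}'$ and set $\widetilde{M}_t = \{ e \in E(\mathcal{G}) \mid \psi(e) = f_t \}$. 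Since $\psi$ preserves the collection of incident hyperedges at each vertex, every vertex $x$ of $\mathcal{G}$ lies in exactly one hyperedge mapped to $f_t$; thus each $\widetilde{M}_t$ is a perfect matching, and these $r$ matchings partition $E(\mathcal{G})$.

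The only genuinely nontrivial ingredient is Theorem~\ref{commoncover}; once the common covering $\mathcal{G}$ is available, the transfer of structure is routine. The main points to verify are that $d$-uniformity forces $\psi$ to be injective on each hyperedge, so that distinct vertices of a hyperedge land in distinct parts, and that the defining bijection of a covering on incident hyperedges distributes each vertex into exactly one lifted class. I expect the confirmation that the classes $\widetilde{M}_t$ are simultaneously perfect matchings and a partition of $E(\mathcal{G})$ to be the most delicate bookkeeping, but it follows directly from the covering axioms.
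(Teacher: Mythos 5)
Your proposal is correct and follows essentially the same route as the paper: both take the all-ones $d\times r$ incidence matrix as the model multihypergraph with trivial (one-color) perfect colorings of parameters $V=(r)$, $W=(d)$, invoke Theorem~\ref{commoncover} to obtain a common covering $\mathcal{G}$, and read off the parts and matchings as preimages of the model's vertices and hyperedges. Your write-up is somewhat more careful than the paper's on two points — reducing to connected components (needed since Theorem~\ref{commoncover} assumes connectedness) and spelling out why the covering map is injective on each hyperedge — but these are elaborations of the same argument, not a different one.
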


\begin{proof}
Consider an auxiliary multihypergraph $\mathcal{F}$ with the incidence matrix $B$ of size $d \times r$ whose all entries equal to $1$. It consists of $d$ vertices $x_1, \ldots, x_d$ and a hyperedge $e = (x_1, \ldots, x_d)$ of multiplicity $r$. 
By the definition, each hyperedge of $\mathcal{F}$ is a perfect matching and $\mathcal{F}$ is a $d$-partite hypergraph (each vertex is a part). 

Colorings of all vertices into one color are perfect colorings of $\mathcal{H}$ and $\mathcal{F}$ that also induce monochrome colorings of hyperedges of these hypergraphs.  The incidence parameter matrices $W$ and $V$ of these colorings are the same, have size $1 \times 1$ and, by the definition, $W = (d)$ and $V = (r)$. By Theorem~\ref{commoncover}, there is a hypergraph $\mathcal{G}$ covering both $\mathcal{H}$ and $\mathcal{F}$. Since the covering preserves the incidence relations between vertices and hyperedges, we see that every vertex of $\mathcal{F}$ corresponds to a part of $\mathcal{G}$ and every hyperedge gives a perfect matching in $\mathcal{G}$. So $\mathcal{G}$ is a $d$-partite hypergraph that can be partitioned into $r$ perfect matchings.
\end{proof}

\section{Examples of perfect colorings of hypergraphs} \label{example-section}

\subsection{Transversals and perfect matchings}

A $k$-transversal in uniform regular hypergraphs gives us the simplest nontrivial example of perfect colorings: the vertices of a hypergraph are colored into two colors (transversal and non-transversal),  while all hyperedges have  the same color. So if $\mathcal{G}$ is a $d$-uniform $r$-regular hypergraph, then a $k$-transversal is a perfect coloring with the incidence parameters
$$V = \left( \begin{array}{c} r \\ r \end{array} \right); ~~~ W = \left( \begin{array}{cc} k & d-k \end{array} \right).$$

By Theorem~\ref{adjparam}, the parameter matrix $\mathbb{S}$ of a $k$-transversal  is the  $d$-dimensional matrix of order $2$ with entries 
$$s_{\gamma} = \left\{  \begin{array}{l}
r  {d-1 \choose k-1}^{-1} \mbox{ if } \gamma_1 = 1 \mbox{ and } \# \{ \gamma_i : \gamma_i = 1  \} = k; \\
 r  {d-1 \choose k}^{-1}\mbox{ if } \gamma_1 = 2 \mbox{ and } \# \{ \gamma_i : \gamma_i = 1  \} = k; \\
 0 \mbox{ otherwise}.
\end{array}  \right.$$

For example, the parameter matrices of $k$-transversals in $d$-uniform $r$-regular hypergraphs for some small $d$ and $k$ are the following:
$$d = 2, ~k= 1: ~~~~~~~~~~~ \mathbb{S} = \left(  \begin{array}{cc}  0 & r \\ r & 0 \end{array} \right);$$
$$d = 3, ~k= 1: ~~~~~~ \mathbb{S} = \left(  \begin{array}{cc|cc}  0 & \nicefrac{r}{2} & r & 0 \\ \nicefrac{r}{2} & 0 & 0 & 0 \end{array} \right);$$
$$d = 4, ~k= 1: ~~~~~~ \mathbb{S} = \left(  \begin{array}{cc|cc}  0 & \nicefrac{r}{3} & r & 0 \\ \nicefrac{r}{3} & 0 & 0 & 0 \\ \hline  \nicefrac{r}{3} & 0 & 0 & 0 \\ 0 & 0 & 0 & 0 \end{array} \right);$$
$$d = 4, ~k= 2: ~~~~~~ \mathbb{S} = \left(  \begin{array}{cc|cc}  0 & 0 & 0 & \nicefrac{r}{3} \\ 0 & \nicefrac{r}{3} & \nicefrac{r}{3} & 0 \\ \hline  0 & \nicefrac{r}{3} & \nicefrac{r}{3} & 0 \\ \nicefrac{r}{3} & 0 & 0 & 0 \end{array} \right).$$

Let us find eigenvalues of transversals as perfect colorings of hypergraphs.

\begin{teorema} \label{transspectrum}
Suppose that $\mathbb{S}$ is the parameter matrix of the perfect coloring corresponding to a $k$-transversal  in a $d$-uniform $r$-regular hypergraph $\mathcal{G} $. Then the eigenvalues of  $\mathbb{S}$ are $\lambda_0 = 0$ and $\lambda_j = r\xi^{jk}$, where $\xi$ is a $d$-th primitive root of unity.  In particular, the matrix $\mathbb{S}$ has  $\frac{d}{\gcd(k,d)}$ different  nonzero eigenvalues. 
\end{teorema}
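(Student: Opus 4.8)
The plan is to solve the eigenvalue equation $\mathbb{S}\circ x = \lambda\,(\mathbb{I}\circ x)$ directly, using that $\mathbb{S}$ has order $2$, so $x=(x_1,x_2)$ and $\mathbb{I}\circ x=(x_1^{d-1},x_2^{d-1})$. First I would compute the two components of $\mathbb{S}\circ x$ from the definition of $\circ$ together with the explicit entries of $\mathbb{S}$. For the first component the only nonzero entries $s_{1,j_2,\dots,j_d}$ are those with exactly $k-1$ of the indices $j_2,\dots,j_d$ equal to $1$; there are $\binom{d-1}{k-1}$ such tuples, each contributing $r\binom{d-1}{k-1}^{-1}x_1^{k-1}x_2^{d-k}$, so the binomial factors cancel and $(\mathbb{S}\circ x)_1=r\,x_1^{k-1}x_2^{d-k}$. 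The same cancellation for the second component gives $(\mathbb{S}\circ x)_2=r\,x_1^{k}x_2^{d-1-k}$. The eigenvalue equation thus reduces to the system
\[
r\,x_1^{k-1}x_2^{d-k}=\lambda\,x_1^{d-1},\qquad r\,x_1^{k}x_2^{d-1-k}=\lambda\,x_2^{d-1}.
\]

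Next I would analyse this system. In the generic case $x_1,x_2\neq 0$, dividing the first equation by $x_1^{d-1}$ and the second by $x_2^{d-1}$ and setting $t=x_1/x_2$ yields $\lambda=r\,t^{-(d-k)}$ and $\lambda=r\,t^{k}$. Equating the two forces $t^{d}=1$, i.e.\ $t=\xi^{j}$ for some $d$-th root of unity, whence $\lambda=r\,t^{k}=r\,\xi^{jk}$. Conversely, taking $x=(\xi^{j},1)$ I would verify that each such pair satisfies both equations, so every value $r\,\xi^{jk}$ is genuinely attained.

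The remaining case is when one coordinate vanishes; this is where I expect the only real care is needed, since it is responsible for the extra eigenvalue $\lambda_0=0$ and for the boundary behaviour in small dimension. If $x_1=0,\ x_2\neq 0$ the second equation gives $\lambda x_2^{d-1}=0$, hence $\lambda=0$, and the first equation is consistent provided $k\geq 2$ (so that $0^{k-1}=0$); symmetrically $x_2=0,\ x_1\neq0$ gives $\lambda=0$ provided $k\leq d-2$. For $d\geq 3$ at least one of these degenerate eigenvectors exists, so $0$ is indeed an eigenvalue, while no further values of $\lambda$ arise; the sole exception is the classical graph case $d=2,\ k=1$, where the system forces $x_1x_2\neq0$ and $0$ is not an eigenvalue. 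Assembling the cases, the spectrum is exactly $\{0\}\cup\{r\,\xi^{jk}:j=0,\dots,d-1\}$.

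Finally, to count the distinct nonzero eigenvalues I would note that $\xi^{k}$ is a root of unity of order $d/\gcd(k,d)$, so the powers $\xi^{jk}$ take precisely $d/\gcd(k,d)$ distinct values as $j$ ranges over $0,\dots,d-1$; multiplying by the nonzero constant $r$ preserves this count. The main obstacle is not the generic computation, which is a clean cancellation of multinomial coefficients, but rather the bookkeeping of the degenerate solutions, where one must track exactly for which $(d,k)$ a zero coordinate yields a valid eigenvector, so as to neither miss the eigenvalue $0$ nor spuriously introduce it.
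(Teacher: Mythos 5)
Your proposal is correct and follows essentially the same route as the paper: both reduce the eigenvalue equation to the system $r\,x_1^{k-1}x_2^{d-k}=\lambda x_1^{d-1}$, $r\,x_1^{k}x_2^{d-k-1}=\lambda x_2^{d-1}$, obtain $\lambda=0$ from vectors with a zero coordinate, and get $t^d=1$, $\lambda=r\,t^k$ in the generic case. Your bookkeeping of exactly when $(x_1,0)$ or $(0,x_2)$ is a valid eigenvector (and the resulting $d=2$, $k=1$ exception) is in fact more careful than the paper, which simply asserts both as eigenvectors for $\lambda_0=0$.
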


\begin{proof}

Knowing the entries of the matrix $\mathbb{S}$ from Theorem~\ref{adjparam}, we find that the  equation $\mathbb{S} \circ x =  \lambda \mathbb{I} \circ x $ on eigenvalues $\lambda$ is equivalent to the following system:

$$ \left\{  \begin{array}{c}  -\lambda x_{1}^{d-1} + r x_1^{k-1} x_2^{d - k} = 0; \\
r x_1^{k} x_2^{d-k-1} - \lambda x_2^{d-1} = 0.  \end{array}  \right.
$$

From here it is easy to see that  $\lambda_0 = 0$ is an eigenvalue with the corresponding eigenvectors $(x_1, 0)$ and $(0, x_2)$. 

Assume that $x= (x_1, x_2)$ is an eigenvector with both nonzero components: $x_1 = t x_2$, $t\neq 0$. Then the above system of equations can be rewritten as
$$ \left\{  \begin{array}{l}  -\lambda t^{d-1} + r  t^{k-1} = 0; \\
r t^{k} - \lambda  = 0.  \end{array}  \right.
$$
Consequently,  $\lambda = r t^k$ and  $t^d = 1$, so $t$ is a $d$-th root of unity. So all  nonzero  eigenvalues of the matrix $\mathbb{S}$ are $\lambda = r \xi^{jk}$, $j = 1, \ldots, d$, where $\xi$ is a  $d$-th primitive root of unity. 
\end{proof}

We believe that all nonzero eigenvalues of $\mathbb{S}$ have the same algebraic multiplicity. Moreover, we propose the following conjecture  on the characteristic polynomial  of $\mathbb{S}$.

\begin{con}
Let $\mathbb{S}$ be the parameter matrix of a $k$-transversal in a $d$-uniform $r$-regular hypergraph $\mathcal{G} $. Then  the characteristic polynomial of $\mathbb{S}$ is
$$\varphi(\lambda) = \lambda^{d-2} \prod\limits_{i = j}^d ( \lambda - \xi^{jk} r),$$
where $\xi$ is a $d$-th primitive root of unity. 
\end{con}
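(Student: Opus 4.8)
The plan is to compute the characteristic polynomial $\varphi_{\mathbb{S}}$ of the order-$2$, $d$-dimensional matrix $\mathbb{S}$ directly as a resultant. By the definition of the characteristic polynomial of a tensor (the resultant theory of Qi and of Gelfand--Kapranov--Zelevinsky cited above), $\varphi_{\mathbb{S}}(\lambda)$ equals, up to a nonzero normalizing constant, the resultant in the variables $(x_1,x_2)$ of the two binary forms of degree $d-1$ appearing in the eigen-equation $\mathbb{S}\circ x=\lambda\,\mathbb{I}\circ x$ written out in the proof of Theorem~\ref{transspectrum}, namely
$$f=-\lambda x_1^{d-1}+r\,x_1^{k-1}x_2^{d-k},\qquad g=r\,x_1^{k}x_2^{d-k-1}-\lambda x_2^{d-1}.$$
The resultant of two binary forms of degree $d-1$ is a polynomial in $\lambda$ of degree $2(d-1)$, which coincides with the degree $n(d-1)^{n-1}=2(d-1)$ of $\varphi_{\mathbb{S}}$ from Qi's theorem; hence it suffices to evaluate $\mathrm{Res}(f,g)$ and read off its factorization.

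First I would strip the monomial parts, writing $f=x_1^{k-1}f_1$ and $g=x_2^{d-k-1}g_1$ with $f_1=-\lambda x_1^{d-k}+r x_2^{d-k}$ and $g_1=r x_1^{k}-\lambda x_2^{k}$. Bi-multiplicativity of the homogeneous resultant then factors $\mathrm{Res}(f,g)$ as a product of $\mathrm{Res}(x_1^{k-1},x_2^{d-k-1})$, $\mathrm{Res}(x_1^{k-1},g_1)$, $\mathrm{Res}(f_1,x_2^{d-k-1})$, and $\mathrm{Res}(f_1,g_1)$. Because the homogeneous resultant is a polynomial in the coefficients, these are genuine polynomial identities in $\lambda$ that remain valid even at $\lambda=0$, where $f_1$ and $g_1$ drop degree.

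Next I would evaluate the four pieces. The pure-power resultant $\mathrm{Res}(x_1^{k-1},x_2^{d-k-1})$ is a nonzero constant, as $x_1$ and $x_2$ have no common projective zero. The two mixed resultants evaluate a binomial at a coordinate point and give $\pm\lambda^{k-1}$ and $\pm\lambda^{d-k-1}$; their product is exactly the factor $\lambda^{(k-1)+(d-k-1)}=\lambda^{d-2}$, which is the source of the $\lambda^{d-2}$ in the conjecture. The remaining factor is the heart of the matter: dehomogenizing and applying the classical binomial resultant formula $\mathrm{Res}(t^{m}-a,t^{n}-b)=(-1)^{m}\bigl(b^{m/g}-a^{n/g}\bigr)^{g}$ with $m=d-k$, $n=k$, $g=\gcd(d-k,k)=\gcd(k,d)$, $a=r/\lambda$, $b=\lambda/r$, and accounting for the leading-coefficient scaling $(-\lambda)^{k}r^{d-k}$, one checks that the negative powers of $\lambda$ cancel and $\mathrm{Res}(f_1,g_1)=(-1)^{d}\bigl(\lambda^{d/g}-r^{d/g}\bigr)^{g}$. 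Since $\xi^{jk}$ runs over the $\bigl(d/\gcd(k,d)\bigr)$-th roots of unity, each attained $\gcd(k,d)$ times as $j$ ranges over $1,\dots,d$, this is precisely $(-1)^{d}\prod_{j=1}^{d}(\lambda-\xi^{jk}r)$. Assembling the four factors and normalizing to make $\varphi_{\mathbb{S}}$ monic yields $\varphi_{\mathbb{S}}(\lambda)=\lambda^{d-2}\prod_{j=1}^{d}(\lambda-\xi^{jk}r)$, which also shows that each nonzero eigenvalue has algebraic multiplicity $\gcd(k,d)$.

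The main obstacle is conceptual rather than computational: one must justify that Qi's hyperdeterminant-based characteristic polynomial is literally this Sylvester resultant, with no spurious extra factor or extraneous power, so that the root multiplicities extracted above are the true algebraic multiplicities (recall the paper already warns that geometric and algebraic multiplicities differ for tensors). The matching degrees $2(d-1)$ make equality very plausible, but pinning it down requires the precise definition of the tensor characteristic polynomial. A secondary, purely technical difficulty is the careful bookkeeping of signs, leading coefficients, and degree drops in the homogeneous resultant, in particular verifying that the monomial factors contribute exactly $\lambda^{d-2}$ and that nothing is lost at $\lambda=0$.
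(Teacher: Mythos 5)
There is nothing in the paper to compare your argument against: this statement is explicitly left as a \emph{conjecture} (the preceding text reads ``we propose the following conjecture'') and no proof is offered, so your proposal has to be judged on its own merits. On that basis it is essentially sound, and the obstacle you single out as the ``main'' one is not really an obstacle: for a $d$-dimensional matrix of order $n=2$, Qi's characteristic polynomial is by definition the determinant, in the Gelfand--Kapranov--Zelevinsky sense, of $\mathbb{S}-\lambda\mathbb{I}$, and for $n=2$ that determinant \emph{is} the classical Sylvester resultant of the two binary forms $(\mathbb{S}\circ x)_1-\lambda x_1^{d-1}$ and $(\mathbb{S}\circ x)_2-\lambda x_2^{d-1}$, up to a universal nonzero constant; the matching degree $2(d-1)$ is then automatic rather than merely plausible. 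Your factorization also checks out: with $f=x_1^{k-1}f_1$, $g=x_2^{d-k-1}g_1$, $f_1=-\lambda x_1^{d-k}+r x_2^{d-k}$, $g_1=r x_1^{k}-\lambda x_2^{k}$, multiplicativity of the homogeneous resultant (valid as a polynomial identity in $\lambda$; no degree drop occurs since $f_1$ and $g_1$ remain nonzero forms of degrees $d-k$ and $k$ for every $\lambda$, including $\lambda=0$) produces the factor $\pm\lambda^{(k-1)+(d-k-1)}=\pm\lambda^{d-2}$ from the mixed pieces, and the product formula gives $\mathrm{Res}(f_1,g_1)=(-1)^{d}\bigl(\lambda^{d/g}-r^{d/g}\bigr)^{g}$ with $g=\gcd(k,d)$ --- I verified the leading coefficient $(-1)^d\lambda^{d}$ and the nonvanishing at $\lambda=0$ independently --- which is $(-1)^{d}\prod_{j=1}^{d}(\lambda-\xi^{jk}r)$ since $\xi^{jk}$ runs over the $(d/g)$-th roots of unity, each $g$ times. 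After monic normalization this is exactly the conjectured polynomial (the paper's $\prod_{i=j}^{d}$ is evidently a typo for $\prod_{j=1}^{d}$), and it confirms the author's stated belief that every nonzero eigenvalue has algebraic multiplicity $\gcd(k,d)$ while $0$ has multiplicity $d-2$; a useful sanity check is $d=3$, $k=1$, where the resultant computes directly to $\pm\lambda(\lambda^{3}-r^{3})$. What separates your proposal from a complete proof is only the bookkeeping you already flag --- citing the resultant definition of the order-two characteristic polynomial and fixing signs and the normalizing constant --- none of which threatens the conclusion.
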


Combining Theorem~\ref{transspectrum} with Theorem~\ref{adjcpecinclude}, we obtain that every $d$-uniform $r$-regular hypergraph with a $k$-transversal should have values $r\xi^{jk}$ in its spectrum. It gives a spectral condition on existence of $k$-transversals that cannot be reduced to the spectrum of the induced coloring of the incidence graph.

Indeed, consider complete $3$-uniform hypergraphs on $n \geq 4$ vertices. It is obvious that they do not contain transversals. 

Using computational results for~\cite{CoopDut.hyperspec} (available at~\cite{dutle.compres}), one can check that  complete $3$-uniform hypergraphs on $n$ vertices, $4 \leq n \leq 8$, do not have eigenvalues $r e^{\pm \frac{2 \pi i}{3}}$. It indicates that they do not contain  a transversal. 

On the other hand,  the parameter matrix of the induced perfect coloring  of the incidence graph of a $d$-uniform $r$-regular hypergraph is
$$\left( \begin{array}{cc} 0 & W \\ V & 0 \end{array} \right) = \left( \begin{array}{ccc} 0 & 1 & d-1 \\ r & 0 & 0 \\ r & 0 & 0  \end{array} \right).$$
The eigenvalues of this matrix are $\lambda_1 = 0$ and $\lambda_{2,3} = \pm \sqrt{dr}$, where the latter values are contained in the eigenspectrum of every  bipartite biregular graph  with degrees of parts $r$ and $d$. Eigenvalue $0$ belongs to the spectrum of  many of bipartite biregular graphs, for example, the incidence graph of the complete $3$-uniform hypergraph on $5$ vertices. 

\subsection{Perfect $2$-colorings of $3$-uniform hypergraphs}

Let us completely describe the smallest case of hypergraph perfect colorings, i.e., perfect colorings $P$ of $r$-regular $3$-uniform hypergraphs in $2$ colors.

Assume that  $n_1$ and  $n_2$ are the numbers of vertices of  first and second  colors and let $\chi = n_1 / n_2 \leq 1$. 
In  a general case, the incidence parameters $(V,W)$ of the coloring $P$ are
$$
V = \left( \begin{array}{cccc}  v_{1,1} & v_{1,2} & v_{1,3} & 0 \\  0 & v_{2,2} & v_{2,3} & v_{2,4} \end{array} \right); ~~~~ W =  \left( \begin{array}{cc}  3 & 0 \\ 2 & 1 \\ 1 & 2 \\ 0 & 3  \end{array} \right),
$$
where $v_{1,1} + v_{1,2}+ v_{1,3} = v_{2,2} + v_{2,3} + v_{2,4} = r$. If some color range of hyperedges is absent in the coloring $P$, then the matrices $W$ and $V$ lack the corresponding row  and column.

From the equality $N V = W^T M$ (see Theorem~\ref{VWblock}(1)), we deduce that
$$n_1 v_{1,2} = 2 n_2 v_{2,2}; ~~~ n_2 v_{2,3} = 2n_1 v_{1,3},$$
and, consequently,
\begin{equation} \label{2color3uni}
\frac{v_{2,2}}{v_{1,2}} = \chi /2; ~~~ \frac{v_{2,3}}{v_{1,3}} = 2 \chi.
\end{equation}

The parameter matrix of a perfect coloring $P$ is a $3$-dimensional matrix of order $2$:
$$\mathbb{S} = \left( \begin{array}{cc|cc} 
s_{1,1,1} & s_{1,1,2} & s_{2,1,1} & s_{2,1,2} \\ 
s_{1,2,1} & s_{1,2,2} & s_{2,2,1} & s_{2,2,2}  
\end{array} \right).$$

With the help of Theorem~\ref{adjparam}, we find entries of the parameter matrix $\mathbb{S}$:
\begin{gather*}
s_{1,1,1} = v_{1,1}; ~~~ s_{1,1,2} = s_{1,2,1} = v_{1,2} / 2; ~~~ s_{1,2,2} = v_{1,3};\\
s_{2,1,1} = v_{2,2}; ~~~ s_{2,1,2} = s_{2,2,1} = v_{2,3}/2; ~~~ s_{2,2,2} = v_{2,4}. 
\end{gather*}

Using additionally equalities~(\ref{2color3uni}), we reduce the number of essential parameters of the matrix $\mathbb{S}$ and denote them by $a, b ,c$ and $d$:
$$\mathbb{S} = \left( \begin{array}{cc|cc} 
a &  b & \chi b  & \chi c \\ 
 b & c  & \chi c & d 
\end{array} \right).$$

To calculate the characteristic polynomial $\varphi(\lambda)$ of the matrix $\mathbb{S}$, we use a formula from~\cite{MorShak.resultform} for resultants $R_{2,2}$:
\begin{gather*}
\varphi(\lambda) = \lambda^4  - 2\lambda^3(a + d) + \lambda^2 (d^2 + 4ad + a^2 - 6\chi bc) + \\ \lambda ((a +d) (6 \chi bc - 2ad) - 4 \chi^2 c^3 - \chi b^3) + a^2d^2 - 3 \chi^2 b^2c^2 -6\chi abcd + 4\chi^2 ac^3 + \chi b^3 d. 
\end{gather*}

\subsection{Fano's plane hypergraph}

Here we illustrate that perfect colorings can be used for finding eigenvalues of some hypergraphs and multidimensional matrices. 

Consider a  hypergraph $\mathcal{F}$ corresponding to the Fano's plane: $\mathcal{F}$ is a $3$-uniform hypergraph on $7$ vertices with the following $7$ hyperedges:
$$(x_1, x_2, x_3), ~ (x_1, x_4, x_5), ~ (x_1, x_6, x_7), ~(x_2, x_4, x_6), ~(x_2, x_5, x_7), ~(x_3, x_4, x_7), ~ (x_3, x_5, x_6).$$

Due to the symmetry of the hypergraph $\mathcal{F}$, it is not hard to list all its $2$-colorings  and  find that only two of them are perfect.

1. Color one vertex of $\mathcal{F}$ into the first color, and all other six vertices into the second color (Figure~2):
\begin{center}
\includegraphics[width=0.3\linewidth]{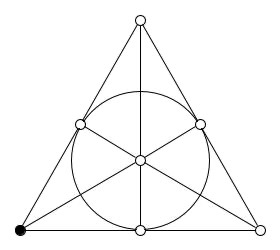}  

Figure 2: The first perfect 2-coloring of the Fano's plane hypergraph.
\end{center}

The incidence parameters are
$$V = \left( \begin{array}{cc}
3 & 0 \\ 1 & 2
\end{array} \right);  ~~~~
W = \left( \begin{array}{cc}
1 & 2 \\ 0 & 3
\end{array} \right).  $$
Using the help of Theorem~\ref{adjparam}, we find that the multidimensional parameter matrix of this perfect coloring is
$$\mathbb{S}_1 = \left( \begin{array}{cc|cc} 
0 & 0 & 0 & \nicefrac{1}{2} \\ 
0 & 3 & \nicefrac{1}{2} & 2
\end{array} \right).$$
To calculate the characteristic polynomial $\varphi_1(\lambda)$ of the matrix $\mathbb{S}_1$, we use a formula from~\cite{MorShak.resultform} or the result for $2$-colorings of $3$-uniform hypergraphs from the previous section:
$$\varphi_1(\lambda) = \lambda(\lambda-3)(\lambda^2 - \lambda +1).$$
 So the parameter matrix $\mathbb{S}_1$ has eigenvalues $0$, $3$, and $\frac{1}{2} (1 \pm i \sqrt{3})$.
 
2. Another perfect coloring of the hypergraph $\mathcal{F}$ is presented at Figure~3.
\begin{center}
\includegraphics[width=0.3\linewidth]{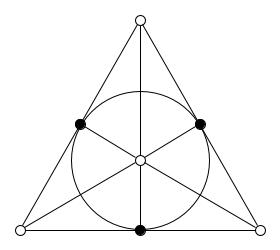}  

Figure 3: The second perfect 2-coloring of the Fano's plane hypergraph.
\end{center}

 The incidence parameters of this coloring  are
$$V = \left( \begin{array}{cc}
1 & 2 \\ 0 & 3
\end{array} \right);  ~~~~ W = \left( \begin{array}{cc}
3 & 0 \\ 1 & 2
\end{array} \right). $$
Theorem~\ref{adjparam} gives the following multidimensional parameter matrix:
$$\mathbb{S}_2 = \left( \begin{array}{cc|cc} 
1 & 0 & 0 & \nicefrac{3}{2} \\ 
0 & 2 & \nicefrac{3}{2} & 0
\end{array} \right).$$
Using the result for $2$-colorings of $3$-uniform hypergraphs from previous section, we find the characteristic polynomial  for the matrix $\mathbb{S}_2$:
$$\varphi_2(\lambda) = (\lambda-3)(\lambda - 1)(\lambda^2 + 2\lambda +6).$$
 So the parameter matrix $\mathbb{S}_2$ has eigenvalues $1$, $3$, and $-1 \pm i \sqrt{5}$.

Thus, the hypergraph $\mathcal{F}$ of the Fano's plane has at least $7$ different eigenvalues:
$$\lambda_0 = 0,~ \lambda_1 = 1, ~ \lambda_2 = 3, ~ \lambda_{3,4} = \frac{1}{2} (1 \pm i \sqrt{3}), \lambda_{5,6} = -1 \pm i \sqrt{5}.$$

\section*{Acknowledgements}

The author is grateful to the anonymous reviewer for valuable comments. 
 
This work was funded by the Russian Science Foundation under grant No 22-21-00202,  https://rscf.ru/project/22-21-00202/.

\begin{bibdiv}
    \begin{biblist}[\normalsize]
    \bibselect{biblio}
    \end{biblist}
    \end{bibdiv}

\end{document}